\def\re{\mathbb{R}}
\def\N{\mathbb{N}}
\def\pd{\partial}
\def\la{\lambda}
\def\al{\alpha}
\def\({\left(}
\def\){\right)}
\def\pd{\partial}
\def\BOX{{\setlength{\unitlength}{1pt}\begin{picture}(8,8)
	\put(1,1){\framebox(6,6)}\end{picture}}\ }
\def\qed{\hfill\BOX\vskip1em\par}
\def\intO{\int_{\Omega}}
\numberwithin{equation}{section}
\newtheorem{theorem}{Theorem}[section]
\newtheorem{lemma}[theorem]{Lemma}
\newtheorem{remark}[theorem]{Remark}
\numberwithin{theorem}{section}
\begin{document}
\title[The equality case]{Another approach to the equality case of the sharp logarithmic Sobolev inequality}
\author[F. Feo]{Filomena Feo$^1$}
\author[F. Takahashi]{Futoshi Takahashi$^2$}

\begin{abstract}
In this note, we characterize the equality case of the sharp $L^2$-Euclidean logarithmic Sobolev inequality with monomial weights,
exploiting the idea by Bobkov and Ledoux \cite{Bob}.
Our approach is new even in the unweighted case.
Also, we show that the same strategy yields the equality case of the sharp $L^p$-Euclidean logarithmic Sobolev inequality for $1 < p < \infty$ with arbitrary norm on $\re^n$, 
if the inequality is unweighted. 
\end{abstract}

\setcounter{footnote}{1}
\footnotetext{
Dipartimento di Ingegneria, Universit\`{a} degli Studi di
Napoli \textquotedblleft Parthenope\textquotedblright, Centro
Direzionale Isola C4 80143 Napoli, Italy. \\
 e-mail:{\tt filomena.feo@uniparthenope.it}}

\setcounter{footnote}{2}
\footnotetext{
Department of Mathematics, Osaka Metropolitan University,
3-3-138 Sugimoto, Sumiyoshi-ku, Osaka 558-8585, Japan. \\
e-mail:{\tt futoshi@omu.ac.jp}}

\medskip
%\noindent
%{\sl Key words:}  
%\\[0.1cm]
%{\sl 2010 Mathematics Subject Classification: 35J35, 35J60}  

\maketitle

\bigskip

\section{Introduction}

Let $1 \le p < \infty$.
The Euclidean $L^p$-logarithmic Sobolev inequality states that
\begin{equation}
\label{ELSp}
	\int_{\re^n} |f|^p \log |f|^p dx \le \frac{n}{p} \log \( \mathcal{L}_p \int_{\re^n} |\nabla f|^p dx \)
\end{equation}
holds for any function $f \in W^{1,p}(\re^n)$ such that $\int_{\re^n} |f|^p dx = 1$. 
This form of inequality was first proved by Weissler \cite{Weissler} for $p = 2$,  
Ledoux \cite{Ledoux} for $p=1$ (but in this case, $BV(\re^n)$ is an appropriate function space rather than the Sobolev space $W^{1,1}(\re^n)$),
del Pino and Dolbeault \cite{DD} for $1 < p < n$,
and Gentil \cite{Gentil} (see also \cite{Fujita}) for $1 < p < \infty$.
Actually, Gentil extends the result in \cite{DD} not only for all $p > 1$, but also for any norm on $\re^n$ other than usual Euclidean norm.
Here the sharp constant $\mathcal{L}_p$ is given by
\begin{align*}
	&\mathcal{L}_1 = \frac{1}{n} \pi^{-1/2} \(\Gamma(n/2+1)\)^{1/n}, \quad p = 1, \\
	&\mathcal{L}_{p} = \frac{p}{n} \(\frac{p-1}{e}\)^{p-1} \pi^{-p/2} \(\frac{\Gamma(n/2+1)}{\Gamma(n/p' + 1)} \)^{p/n}, \quad p > 1,
\end{align*}
here and throughout of the paper, $p' = \frac{p}{p-1}$ for $p > 1$.
For $p=1$, there is no function in $W^{1,1}(\re^n)$ which achieves the equality. 
In fact, Beckner \cite{Beckner3} proved that the equality in \eqref{ELSp} holds if and only if $f \in BV(\re^n)$ is the characteristic function of a ball.
For $1 < p < n$, it is proved in \cite{DD} that the equality in \eqref{ELSp} holds if and only if $f$ has the form
\[
	f(x) = \( \pi^{n/2} \(\frac{\sigma}{p}\)^{n/p'} \frac{\Gamma(n/p'+1)}{\Gamma(n/2+1)} \)^{-1/p} \exp \( -\frac{1}{\sigma} |x - \bar{x}|^{p'} \)
\]
where $\sigma > 0$ and $\bar{x} \in \re^n$.
For the proof of this fact, the authors in \cite{DD} used an idea that the inequality \eqref{ELSp} can be thought of as a limiting case of a family of sharp Gagliardo-Nirenberg inequalities.
In order to characterize all extremizers of \eqref{ELSp}, they relate them to the positive ground state solutions to the equation $-\Delta_p u = u^{p-1} \log u$ in $\re^n$,
and use the radial symmetry and (highly non-trivial) uniqueness result of this equation.
Note that this approach is valid for $1 < p < n$.

Later, Van Hoang Nguyen \cite{Ng} obtained the sharp $L^p$-logarithmic Sobolev inequality \eqref{ELSp} with special weights, \textit{i.e.}, the monomial weights.
For the proof, he first established the sharp Sobolev inequality with monomial weights together with the precise information of extremal functions,
and followed the idea by Beckner and Pearson \cite{Beckner-Pearson}.
His method to establish the sharp Sobolev inequality with monomial weights (or more generally, the sharp Gagliardo-Nirenberg type inequalities with monomial weights) is to adapt the mass transportation technique 
(e.g., see \cite{CE-N-V}) and does not need any symmetrization and rearrangement. 
The resulting inequality is valid for any norm on $\re^n$ other than the Euclidean norm, which we will show below:

Let $\| \cdot \|$ be any norm on $\re^n$ and let $\| \cdot \|_*$ be its dual norm
\[
	\| \xi \|_* = \sup_{\| x \| \le 1} (x \cdot \xi)
\]
where $\cdot$ denotes the inner product on $\re^n$.
Let $A = (A_1, A_2, \dots, A_n)$ be a nonnegative vector in $\re^n$, \textit{i.e.} $A_1 \ge 0, \dots, A_n \ge 0$. 
Define
\[
	\re^n_A = \{ x = (x_1, \dots, x_n) \in \re^n \, : \, x_i > 0 \ \text{if} \ A_i > 0 \}
\]
and the monomial weight
\[
	x^A = |x_1|^{A_1} \cdots |x_n|^{A_n} \text{ for } x \in \re^n_A.
\]
For any open set $\Omega$ of $\re^n$ and $1 \le p < \infty$, let us denote by  $W^{1,p}(\Omega, x^A dx)$ the set of measurable functions on $\Omega$ such that
$\| f \|^p_{W^{1,p}(\Omega, x^A dx)} = \int_{\Omega} (|f(x)|^p +\| \nabla f(x) \|_*^p) x^A dx < \infty$.
Put $B_1=\{x: \| x \| < 1\}$ and denote $B_A = \re^n_A \cap B_1$.
For a bounded Lipschitz domain $\Omega \subset \re^n$,
let 
\[
	m(\Omega) = \intO x^A dx
\]
denote the weighted volume of $\Omega$ and
\[
	D = n + A_1 + \cdots + A_n.
\]

Then Proposition 1.4 in \cite{Ng} for $1 < p < \infty$ can be read as follows: 

\vspace{1em}\noindent
{\bf Theorem A.} {\rm (Sharp $L^p$-logarithmic Sobolev inequality with monomial weights \cite{Ng})}
{\it
Let $1 < p< +\infty$, $A = (A_1, A_2, \dots, A_n)$ be a nonnegative vector in $\re^n$ and $D=n+A_1+\cdots+A_n$.  
For any $f \in W^{1,p}(\re^n,x^A\,dx)$ such that $\int_{\re^n_A} |f|^p x^A dx = 1$,
the inequality
\begin{equation}
\label{ELSWp}
	\int_{\re^n_A} |f|^p (\log |f|^p) x^A dx \le \frac{D}{p} \log \( \mathcal{L}_{p}(A) \int_{\re^n_A} \| \nabla f \|_*^p x^A dx \)
\end{equation}
holds true, where 
\begin{equation}
\label{L}
%\mathcal{L}_{p}(A)=
%	\begin{cases}
%	\frac{p}{D}\left(\frac{p-1}{e}\right)^{p-1}\left[\Gamma(\left(\frac{D}{q}+1\right))m(B_A)\right]^{-p/D} & \mbox{ if } p>1 , \\
%	\frac{1}{D}[m(B_A)]^{-1/D} & \mbox{ if } p=1.
%	\end{cases}
	\mathcal{L}_{p}(A)= \frac{p}{D}\left(\frac{p-1}{e}\right)^{p-1}\left[\Gamma\(\frac{D}{p'}+1\)m(B_A)\right]^{-p/D}
\end{equation}
The equality in \eqref{ELSWp} holds if
\[
%f(x)=
%	\begin{cases}
%	\beta \exp \(-\frac{\| x-x_0 \|^{q}}{\sigma}\) & \mbox{ if } p>1 , \\
%	\beta\chi_{B_A}(\frac{x-x_0}{\sigma}) & \mbox{ if } p=1
%	\end{cases}
	f(x)= \beta \exp \(-\frac{\| x-x_0 \|^{p'}}{\sigma}\)
\]
for some $\sigma>0$, $x_0 = ((x_0)_1, \cdots, (x_0)_n) \in \mathbb{R}^n$ such that $(x_0)_i=0$ if $A_i>0$ for $1 \le i \le n$,
and $\beta$ must satisfy
$|\beta|^{-p}=\int_{\re^n_A} e^{-\frac{p}{\sigma} \| x - x_0 \|^{p'}}x^A\,dx$. %for $p>1$, 
%$|\beta|^{-1}=\sigma^{D} m(B_A)$ for $p=1$.
}

%In particular we have that $|\beta|^{-p}=\(\frac{p}{\sigma}\)^{-\frac{D}{q}}\frac{\Gamma(D/q+1)}{\Gamma(D/2+1)}\Pi(A)^{\frac{D}{2}}$ (see Lemma \ref{Lemma1} below) 
%and if we take $A = (0, \dots, 0)$, then  $k = 0$, $D = n$, and $\Pi(A) = \pi$, so we recover the classical Euclidean $L^p$ Logarithmic Sobolev inequality \eqref{ELSp}.

\vspace{1em}
Finally, in a very recent paper \cite{BDK}, Balogh, Don and Krist\'aly obtain the sharp $L^p$ logarithmic Sobolev inequality for $1 \le p < \infty$ on any open cone $E \subset \re^n$ 
with log-concave homogeneous weight $\omega$ of degree $\tau \ge 0$: \textit{i.e.}, $\omega : E \to (0, \infty)$ is such that $E \ni x \mapsto \log \omega(x)$ is concave 
and $\omega(\la x) = \la^{\tau} \omega(x)$ for any $x \in E$ and $\la \ge 0$.
Let us define $L^p(E, \omega) = \{ f: E \to \re, \int_E |f|^p \omega dx < +\infty \}$ and $W^{1,p}(E, \omega) = \{ f \in L^p(E, \omega) : \| \nabla f \|_* \in L^p(E, \omega) \}$.
Then, the inequality obtained in \cite{BDK} for $1 < p < \infty$ is
\begin{equation}
\label{LS_BDK}
	\int_E |f|^p (\log |f|^p) \omega dx \le \frac{n+\tau}{p} \log \( \mathcal{L}_{\omega, p} \int_E \| \nabla f \|_*^p \omega dx \),
\end{equation}
which holds for any $f \in W^{1,p}(E,\omega)$ such that $\int_E |f|^p \omega dx = 1$,
where
\[
	\mathcal{L}_{\omega, p} = \frac{p}{n+\tau}\left(\frac{p-1}{e}\right)^{p-1}\left[\Gamma\(\frac{n+\tau}{p'}+1\) \int_{E \cap B_1} \omega dx \right]^{-p/(n+\tau)}
\]
If we consider the monomial weight $\omega(x) = x^A$, 
then $E = \re^n_A$, the homogeneous degree $\tau = A_1 + \cdots + A_n = D - n$, and $\mathcal{L}_{\omega, p} = \mathcal{L}_p(A)$. 
Thus the inequality \eqref{LS_BDK} recovers \eqref{ELSWp}.
The authors in \cite{BDK} use (again) the optimal transport theory to give a direct proof of their sharp weighted $L^p$-logarithmic Sobolev inequality, 
avoiding the use of any limiting argument in the Gagliardo-Nirenberg or Sobolev type inequality, as typical proofs do.
The perk of this method is to derive the complete ``if and only if" characterization of the equality case of their new weighted sharp $L^p$-logarithmic Sobolev inequality \eqref{LS_BDK}.

In this paper we propose a new (even in the unweighted case) and an elementary proof of the characterization of the equality case for \eqref{ELSWp} with $p=2$, when the norm is standard Euclidean norm $| \cdot |$
(Theorem \ref{Theorem:equality}).
By the same method, we can prove the equality case for \eqref{ELSWp} for $1 < p < \infty$ with an arbitrary norm, but the unweighted case $A = (0, \cdots, 0)$ (Theorem \ref{Theorem:equality_p}). 
Though Theorem \ref{Theorem:equality} and Theorem \ref{Theorem:equality_p} below in this paper are completely covered by the result in \cite{BDK},
however, we believe that the method we propose here is still interesting and will shed light on the hidden analytic features of the $L^p$-logarithmic Sobolev inequality.

Define
\begin{equation}
\label{Pi}
	\Pi(A)= \left[ \frac{\prod_{i=1}^n \Gamma(\frac{A_i+1}{2})}{2^k} \right]^{2/D}
\end{equation}
where  $k = \sharp \{ i \in \{ 1, \dots, n \} \, : \, A_i > 0 \}$ denotes the number of positive entries of the vector $A$.
The result below is first proved by \cite{DD} when $A = (0, \cdots, 0)$. 

\begin{theorem}\label{Theorem:equality}
Let $A = (A_1, A_2, \dots, A_n)$ be a nonnegative vector in $\re^n$ and $D=n+A_1+\cdots+A_n$. 
Then the equality in the sharp $L^2$-logarithmic Sobolev inequality with monomial weight
\begin{equation}
\label{ELSW2}
	\int_{\re^n_A} |f|^2 (\log |f|^2) x^A dx \le \frac{D}{2} \log \( \frac{2}{\Pi(A)eD} \int_{\re^n_A} |\nabla f|^2 x^A dx \),
\end{equation}
which holds for any $f \in W^{1,2}(\re^n,x^A\,dx)$ such that $\int_{\re^n_A} |f|^2 x^A dx = 1$,
occurs if and only if 
\[
	f(x) = \beta \exp \(-\frac{|x-x_0|^2}{\sigma}\)
\]
for some $\sigma>0$, $x_0 = ((x_0)_1, \cdots, (x_0)_n)\in \mathbb{R}^n$ such that $(x_0)_i=0$ if $A_i>0$, 
and $\beta \in \re$ satisfies
\[
	|\beta|^{-2}= \( \frac{\sigma \Pi(A)}{2} \)^{\frac{D}{2}}.
\]
\end{theorem}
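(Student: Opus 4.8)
The plan is to prove both implications simultaneously by changing the reference measure to a Gaussian adapted to the weight. For a scale parameter $\sigma>0$ set $\rho_\sigma(x)=Z_\sigma^{-1}e^{-|x|^2/\sigma}$ with $Z_\sigma=\int_{\re^n_A}e^{-|x|^2/\sigma}x^A\,dx$, so that $d\mu_\sigma=\rho_\sigma\,x^A dx$ is a probability measure on $\re^n_A$. Because the Gaussian integral factorizes into the one–dimensional integrals $\int_0^\infty e^{-t^2/\sigma}t^{A_i}dt=\tfrac12\sigma^{(A_i+1)/2}\Gamma(\tfrac{A_i+1}{2})$ (full line when $A_i=0$), one gets $Z_\sigma=(\sigma\,\Pi(A))^{D/2}$; this is precisely where $\Pi(A)$ enters, and it already pins down the normalization constant $\beta$ in the statement. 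Writing $f=h\sqrt{\rho_\sigma}$, so that $\int h^2\,d\mu_\sigma=\int f^2x^A dx=1$, I would first record two elementary identities. Splitting $\log f^2=\log h^2+\log\rho_\sigma$ gives
\[
	\int_{\re^n_A}f^2\log f^2\,x^A dx=\mathrm{Ent}_{\mu_\sigma}(h^2)-\log Z_\sigma-\frac1\sigma\int_{\re^n_A}|x|^2f^2\,x^A dx,
\]
while expanding $|\nabla f|^2=\rho_\sigma|\nabla h-\tfrac{x}{\sigma}h|^2$ and integrating the cross term by parts against $\mu_\sigma$ yields
\[
	\int_{\re^n_A}|\nabla f|^2\,x^A dx=\int_{\re^n_A}|\nabla h|^2\,d\mu_\sigma+\frac D\sigma-\frac1{\sigma^2}\int_{\re^n_A}|x|^2f^2\,x^A dx.
\]

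Next I would feed in the logarithmic Sobolev inequality for the weighted Gaussian measure, $\mathrm{Ent}_{\mu_\sigma}(h^2)\le\sigma\int|\nabla h|^2\,d\mu_\sigma$. Conceptually this holds with the sharp constant because the potential $V=|x|^2/\sigma-\sum_i A_i\log|x_i|$ has $\nabla^2V=\frac2\sigma I+\mathrm{diag}(A_i/x_i^2)\ge\frac2\sigma I$, so the Bakry--\'Emery criterion applies; this is the step where I would instead follow the elementary argument of \cite{Bob}. Inserting the second identity and then the first, the moment term $\int|x|^2f^2x^A dx$ cancels \emph{exactly}, leaving the one–parameter family of tight inequalities
\[
	\int_{\re^n_A}f^2\log f^2\,x^A dx\le\sigma\int_{\re^n_A}|\nabla f|^2\,x^A dx-D-\log Z_\sigma\qquad(\sigma>0).
\]
A direct check, using $Z_\sigma=(\sigma\Pi(A))^{D/2}$, shows that as $\sigma$ ranges over $(0,\infty)$ the right–hand sides are exactly the tangent lines of $t\mapsto\frac D2\log(\tfrac{2}{\Pi(A)eD}t)$; their infimum is \eqref{ELSW2}, and equality in \eqref{ELSW2} is equivalent to equality in the tight inequality at the optimal $\sigma_\star$, hence to equality in the weighted Gaussian inequality for $h=f/\sqrt{\rho_{\sigma_\star}}$.

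It remains to analyze that equality case, which is the main obstacle. The extra Hessian term $\mathrm{diag}(A_i/x_i^2)$ is strictly positive precisely in the coordinates with $A_i>0$, so along those directions $\mu_{\sigma_\star}$ is \emph{strictly} more log-concave than the flat threshold $2/\sigma_\star$ and any equality must force $h$ to be constant there; in the directions with $A_i=0$ the threshold is attained and $h$ may be exponential–affine. I expect to establish this rigidity either from an elementary deficit identity as in \cite{Bob} or by a one–dimensional reduction combined with tensorization, concluding that equality holds if and only if $h(x)=\exp(\sum_{i:A_i=0}c_ix_i+c_0)$. Substituting back through $f^2=h^2\rho_{\sigma_\star}$ and completing the square then gives $f=\beta\exp(-|x-x_0|^2/(2\sigma_\star))$ with a shift $x_0$ supported only on the unweighted coordinates, i.e. $(x_0)_i=0$ whenever $A_i>0$, and with $\beta$ determined by $Z$ exactly as claimed; this is the asserted Gaussian after renaming the scale. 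The technical care needed is to justify the integration by parts on $\re^n_A$ for a general competitor $f\in W^{1,2}(\re^n,x^A dx)$ — checking that the weight $x^A$ annihilates the boundary contributions on the hyperplanes $\{x_i=0\}$ with $A_i>0$, and that all the Gaussian moments and entropies appearing above are finite — which I would handle by an approximation argument on compactly supported smooth functions.
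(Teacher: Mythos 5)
Your reduction is correct as far as it goes, and it is a genuinely different route from the paper's. The computation $Z_\sigma=(\sigma\Pi(A))^{D/2}$, the substitution $f=h\sqrt{\rho_\sigma}$, the two identities, the exact cancellation of the moment term, and the observation that \eqref{ELSW2} is the lower envelope over $\sigma>0$ of the tight inequalities
\[
	\int_{\re^n_A}f^2\log f^2\,x^A dx\le\sigma\int_{\re^n_A}|\nabla f|^2\,x^A dx-D-\log Z_\sigma
\]
are all sound (the infimum is attained at $\sigma_\star=D/(2\int|\nabla f|^2 x^A dx)$ and reproduces the constant $\tfrac{2}{\Pi(A)eD}$), so equality in \eqref{ELSW2} is indeed equivalent to equality in the log-Sobolev inequality for the weighted Gaussian measure $\mu_{\sigma_\star}$. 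This part would even give an alternative proof of Theorem A for $p=2$, quite unlike the paper's tensorization of the sharp Sobolev inequality.

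The genuine gap is the rigidity step, which you explicitly defer (``I expect to establish this rigidity\dots''): characterizing equality in $\mathrm{Ent}_{\mu_\sigma}(h^2)\le\sigma\int|\nabla h|^2\,d\mu_\sigma$. This is not a routine verification, and it is exactly where the whole difficulty of Theorem \ref{Theorem:equality} now sits. In the unweighted case this is Carlen's theorem on equality in Gross's inequality, whose known proofs are decidedly nontrivial; for the monomial-weighted measure there is no off-the-shelf statement, and the Bakry--\'Emery semigroup argument gives the inequality but does not hand you its equality cases without substantial additional work (this question is itself the subject of papers such as \cite{Ohta-Takatsu}). Your heuristic that the excess convexity $\mathrm{diag}(A_i/x_i^2)$ is ``strictly'' above the threshold and therefore forces $h$ to be constant in the weighted directions is also not immediate: $A_i/x_i^2\to 0$ as $|x_i|\to\infty$, so the curvature improvement is not uniform, and one truly needs a splitting/rigidity theorem rather than a strict-inequality argument; nor does \cite{Bob}, which concerns weighted Poincar\'e-type inequalities for Cauchy measures, supply such a characterization. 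As written, the proposal reduces the theorem to an unproven statement of comparable depth. The paper sidesteps this entirely: it tracks equality through the tensorized sharp Sobolev inequality (Theorem B), where the extremals are completely known generalized Talenti functions, and then carries out an asymptotic analysis of the parameters $a_l,b_l$ as $l\to\infty$ to force the Gaussian form, so no Gaussian rigidity input is ever needed.
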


\vspace{1em}
In order to explain the basic idea of the proof let us consider the unweighted case $A = (0, \dots, 0)$ in \eqref{ELSW2}.
We take into account the following observations:

\begin{itemize}
\item[i)] The logarithmic Sobolev inequality can be obtained (see the sketch of the proof of Theorem A below) as a ``limit" of the Sobolev inequality for suitable functions.
\item[ii)] The equality case in the classical Sobolev inequality occurs if and only if the functions are of the form
\begin{equation}\label{classic equality}
	a^{-1}(1 +b|x - x_0|^{p'})^{1-\frac{n}{p}}
\end{equation}
where $a\in\mathbb{R}-\{0\}$ and $b>0$ and $x_0\in\mathbb{R}^n$, see \cite{Talenti}.
\item[iii)] When $p=2$, the Euclidean $L^2$-logarithmic Sobolev inequality is known to be equivalent to Gross's logarithmic Sobolev inequality \cite{Gross}
\[
	\int_{\re^n} |g|^2 \log |g| d\gamma \le \int_{\re^n} |\nabla g|^2 d\gamma
\]
for $g$ satisfying $\int_{\re^n} |g|^2 d\gamma = 1$,  where $d\gamma(x) = (2\pi)^{-n/2} e^{-|x|^2/2} dx$ denotes the Gaussian probability measure.
\item[iv)] When $p' = 2$, the family of functions \eqref{classic equality} are densities of some generalized Cauchy distributions,
which have the general form $a^{-1}(1 +b|x|^{2})^{-\beta}$ with $b,\beta>0$ and normalizing constant $a = \int_{\re^n} (1 + b|x|^2)^{-\beta} dx$.
From the observations by Bobkov and Ledoux \cite{Bob},
these probability measures may be considered as a natural \textquotedblleft pre-Gaussian model\textquotedblright,
where the Gaussian case appears in the limit as $\beta\rightarrow + \infty$ (after proper rescaling of the coordinates).
Actually, we can check by using central limit theorem that the normalized Cauchy distribution approaches to the Gaussian distribution as $\beta \to +\infty$.
Note that in order to consider Talenti functions \eqref{classic equality} as generalized Cauchy distributions, the exponent must satisfy $p' = 2$. 
\end{itemize}
Similar observations hold even in the case $A \not\equiv (0, \cdots, 0)$.

Though our observation does not match to the probabilistic interpretation when $p \ne 2$, 
fortunately,
the same kind of strategy does work when the inequality is unweighted. 
Also in this unweighted case we can allow that the inequality may involve any norm in $\re^n$.
See Theorem \ref{Theorem:equality_p} in \S 5.

Note that the authors proposed this idea of proving the characterization of the equality cases first in an unpublished arXiv paper \cite{Feo-TF} in 2019.

The rest of the paper is organized as follows:
In \S 2, we collect some useful lemmas.
In \S 3, we sketch a proof of Theorem A in \cite{Ng} only when the exponent $p=2$, which is needed in the proof of Theorem \ref{Theorem:equality}.
\S 4 is devoted to the proof of Theorem \ref{Theorem:equality}.
\S 5 is devoted to the proof of Theorem \ref{Theorem:equality_p}.

The equality case of logarithmic Sobolev inequalities is studied from many different points of view, see, for example, \cite{Ohta-Takatsu}.

\section{Computational lemmas}

First we collect here several lemmas which will be useful.
Next lemma is an exercise of the book by W. Rudin \cite{Rudin} Chapter 3, page 71.
%see also \cite{Beckner3} page 122, and \cite{Park}.

\begin{lemma}\label{Lemma0}
Let $A = (A_1, A_2, \dots, A_n)$ be a nonnegative vector in $\re^n$ and $D=n+A_1+\cdots+A_n$. 
Let $(X, \mu)$ be a measure space with $\mu(X) = 1$ and assume that $g \in L^s(X, \mu)$ for some $s > 0$.
Then it holds
\[
	\lim_{t \to +0} \( \int_X |g|^t d\mu \)^{1/t} = \exp \( \int_X \log |g(x)| d\mu \)
\]
if $\exp(-\infty)$ is defined to be $0$.
\end{lemma}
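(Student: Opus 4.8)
The plan is to take logarithms and to sandwich $\frac{1}{t}\log\int_X|g|^t\,d\mu$ between two quantities that both converge to $\int_X\log|g|\,d\mu$ as $t\to+0$. Writing $\phi(t)=\left(\int_X|g|^t\,d\mu\right)^{1/t}$, the target is $\log\phi(t)\to\int_X\log|g|\,d\mu$ in $[-\infty,\infty)$, after which exponentiating (with the convention $\exp(-\infty)=0$) gives the claim. Throughout I set $h=\log|g|$, a measurable function valued in $[-\infty,\infty)$; since $g\in L^s$, one checks $\log^+|g|\le\frac1s|g|^s$, so that $\int_X h\,d\mu$ is well defined in $[-\infty,\infty)$, and moreover $\int_X|g|^t\,d\mu\le 1+\int_X|g|^s\,d\mu<\infty$ for all $0<t\le s$, because $|g|^t\le 1+|g|^s$ and $\mu(X)=1$.

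For the lower bound I would apply Jensen's inequality to the convex function $\exp$ and the probability measure $\mu$, namely $\exp\left(t\int_X h\,d\mu\right)\le\int_X e^{th}\,d\mu=\int_X|g|^t\,d\mu$, so that $\exp\left(\int_X\log|g|\,d\mu\right)\le\phi(t)$ for every $t>0$. This already yields $\liminf_{t\to+0}\log\phi(t)\ge\int_X\log|g|\,d\mu$, and it holds verbatim (and trivially) even when the integral equals $-\infty$.

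For the upper bound I would use the elementary inequality $\log y\le y-1$ with $y=\int_X|g|^t\,d\mu$ (finite for $0<t\le s$), obtaining $\log\phi(t)=\frac1t\log\int_X|g|^t\,d\mu\le\int_X\frac{|g|^t-1}{t}\,d\mu=\int_X\frac{e^{th}-1}{t}\,d\mu$. The key elementary fact is that, for each fixed $h$, the map $t\mapsto\frac{e^{th}-1}{t}$ is nondecreasing on $(0,\infty)$ and decreases to $h$ as $t\to+0$, which follows from the monotonicity of $x\mapsto\frac{e^x-1}{x}$. Hence, restricting to $0<t\le s$, the integrand is dominated by the integrable function $\frac{|g|^s-1}{s}$ and decreases pointwise to $h$, so that monotone convergence gives $\int_X\frac{e^{th}-1}{t}\,d\mu\to\int_X h\,d\mu$ in $[-\infty,\infty)$, and therefore $\limsup_{t\to+0}\log\phi(t)\le\int_X\log|g|\,d\mu$. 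Combining the two bounds closes the argument, and I note that the hypothesis $g\in L^s$ is used exactly to produce the integrable majorant $\frac{|g|^s-1}{s}$ near $t=0$.

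The main point requiring care is the degenerate regime $\int_X\log|g|\,d\mu=-\infty$, which in particular covers $\mu(\{g=0\})>0$. Here I would observe that on $\{g=0\}$ the integrand $\frac{e^{th}-1}{t}$ equals $-1/t$ and increases to $0$ as $t$ grows, so that the monotonicity in $t$ and the limit $\downarrow h=-\infty$ persist there; the dominating function $\frac{|g|^s-1}{s}$ remains integrable over the finite-measure space, and the monotone convergence theorem applied to the nonnegative increasing family $\frac{|g|^s-1}{s}-\frac{e^{th}-1}{t}$ then forces $\int_X\frac{e^{th}-1}{t}\,d\mu\to-\infty$. Consequently $\log\phi(t)\to-\infty$, i.e. $\phi(t)\to 0=\exp(-\infty)$, in full agreement with the lower bound. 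No other step is genuinely delicate.
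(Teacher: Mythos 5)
Your proof is correct. Note, however, that the paper does not actually prove this lemma: it is stated with only a pointer to Rudin's \emph{Real and Complex Analysis} (Chapter 3, p.~71), where it appears as an exercise, so there is no in-paper argument to compare against --- your write-up supplies the missing proof. What you give is the canonical solution to that exercise, and it is complete: Jensen's inequality applied to $\exp$ and the probability measure $\mu$ gives the lower bound $\exp\bigl(\int_X \log|g|\,d\mu\bigr) \le \bigl(\int_X |g|^t\,d\mu\bigr)^{1/t}$ for all $t>0$ (trivially also when the integral is $-\infty$), while $\log y \le y-1$ reduces the upper bound to showing $\int_X \frac{|g|^t-1}{t}\,d\mu \to \int_X \log|g|\,d\mu$, which you obtain from the monotone decrease of $t \mapsto \frac{e^{th}-1}{t}$ (with $h=\log|g|$) as $t \downarrow 0$ together with monotone convergence. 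The three points you single out are exactly the ones a careless write-up would fumble: well-definedness of $\int_X \log|g|\,d\mu$ in $[-\infty,\infty)$ via $\log^+|g| \le \frac{1}{s}|g|^s$; the integrable majorant $\frac{|g|^s-1}{s}$ valid for $0<t\le s$, which is precisely where the hypothesis $g \in L^s$ is used; and the degenerate case $\int_X \log|g|\,d\mu = -\infty$ (covering $\mu(\{g=0\})>0$), where applying monotone convergence to the nonnegative increasing family $\frac{|g|^s-1}{s}-\frac{e^{th}-1}{t}$ correctly forces $\log\phi(t)\to-\infty$, matching the convention $\exp(-\infty)=0$. This is also the regime the lemma genuinely needs in the paper, since in the proof of Theorem A the measure $d\mu = |f|^2 x^A\,dx$ is applied to $g=f$ which may vanish on sets of positive measure; your proof covers it.
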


\begin{lemma}\label{Lemma1}
Let $x_0 = ((x_0)_1, \cdots, (x_0)_n) \in \re^n$ be such that $(x_0)_i = 0$ if $A_i > 0$ for any $i$. 
Then for any $\al > 0$ and $t > 0$, we have
\begin{align}
	&\int_{\re^n_A} e^{-t|x - x_0|^{\al}} x^A dx = t^{-\frac{D}{\al}} \frac{\Gamma(\frac{D}{\al}+1)}{\Gamma(\frac{D}{2}+1)} \Pi(A)^{\frac{D}{2}}, \label{G1} \\
	&\int_{\re^n_A} e^{-t|x - x_0|^{\al}} |x - x_0|^{\al} x^A dx = \frac{D}{\al} t^{-\frac{D}{\al}-1} \frac{\Gamma(\frac{D}{\al}+1)}{\Gamma(\frac{D}{2}+1)} \Pi(A)^{\frac{D}{2}}. \label{G2}
\end{align}
\end{lemma}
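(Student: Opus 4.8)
The plan is to reduce everything to integrals centered at the origin, evaluate the Gaussian case $\al = 2$ by elementary separation of variables, and then reach general exponents by a polar-coordinate factorization that isolates an $\al$-independent angular integral. First I would perform the translation $y = x - x_0$. Because the hypothesis forces $(x_0)_i = 0$ precisely on the indices $i$ with $A_i > 0$, this substitution leaves the domain $\re^n_A$ invariant, replaces $|x - x_0|$ by $|y|$, and---crucially---turns the weight $x^A$ into $y^A$ (the translated coordinates with $A_i = 0$ carry the trivial weight $|y_i|^0 = 1$). Thus both identities reduce to computing $\int_{\re^n_A} e^{-t|y|^\al} y^A\,dy$ and its counterpart carrying an extra factor $|y|^\al$, now with $x_0 = 0$.

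For the anchoring case $\al = 2$ I would compute $\int_{\re^n_A} e^{-t|y|^2} y^A\,dy$ directly, since the Gaussian $e^{-t|y|^2} = \prod_i e^{-t y_i^2}$ factors across coordinates. On each index with $A_i = 0$ the one-dimensional integral over $\re$ gives $\sqrt\pi\, t^{-1/2} = \Gamma(1/2)\,t^{-1/2}$, while on each index with $A_i > 0$ the integral over $(0,\infty)$ of $e^{-ty_i^2} y_i^{A_i}$ equals $\tfrac12\Gamma(\tfrac{A_i+1}{2})\, t^{-(A_i+1)/2}$ after the substitution $u = t y_i^2$. Collecting the powers of $t$ gives exactly $t^{-D/2}$, and collecting the Gamma factors---using $\Gamma(1/2)$ for the $A_i = 0$ indices and picking up one factor $\tfrac12$ from each of the $k$ positive indices---reproduces the constant $2^{-k}\prod_{i=1}^n \Gamma(\tfrac{A_i+1}{2}) = \Pi(A)^{D/2}$. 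Hence $\int_{\re^n_A} e^{-t|y|^2} y^A\,dy = \Pi(A)^{D/2}\, t^{-D/2}$, which is precisely \eqref{G1} at $\al = 2$.

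To pass to general $\al$ I would switch to polar coordinates $y = r\omega$. Homogeneity of the weight gives $y^A = r^{D-n}\,\omega^A$ on $\re^n_A$, so that $\int_{\re^n_A} e^{-t|y|^\al} y^A\,dy = \big(\int_0^\infty e^{-tr^\al} r^{D-1}\,dr\big)\, S(A)$, where $S(A) = \int_\Sigma \omega^A\,d\sigma(\omega)$ is an angular integral over $\Sigma = \{\omega \in S^{n-1} : \omega_i > 0 \text{ if } A_i > 0\}$ that does \emph{not} depend on $\al$. The radial factor evaluates to $\tfrac1\al t^{-D/\al}\Gamma(\tfrac D\al) = \tfrac1D t^{-D/\al}\Gamma(\tfrac D\al + 1)$ via the substitution $u = tr^\al$. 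The key point is that I never need to evaluate $S(A)$ head-on: specializing the identity to $\al = 2$ and comparing with the Gaussian computation of the previous step forces $S(A) = D\,\Pi(A)^{D/2}/\Gamma(\tfrac D2+1)$. Substituting this value back into the polar factorization yields \eqref{G1} for every $\al > 0$. Finally, \eqref{G2} follows by differentiating \eqref{G1} in the parameter $t$ (justified by dominated convergence on compact subintervals of $(0,\infty)$), since $-\partial_t e^{-t|y|^\al} = |y|^\al e^{-t|y|^\al}$.

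The only genuinely delicate ingredient is the appearance of the constant $\Pi(A)$, i.e. the evaluation of the angular integral $S(A)$; the device above sidesteps a direct computation on the sphere (which would otherwise reduce to a Dirichlet-type integral) by transferring it to the fully separable Gaussian case. I therefore expect the main obstacle to be purely bookkeeping---tracking the powers of $t$ and distinguishing the $\Gamma(1/2)$ factors on the $A_i = 0$ indices from the $\tfrac12\Gamma(\tfrac{A_i+1}{2})$ factors on the $A_i > 0$ indices---rather than anything conceptual.
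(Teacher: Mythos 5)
Your proof is correct, and it differs from the paper's in one genuinely useful way. Both arguments share the same skeleton: translate by $x_0$ (using that $(x_0)_i = 0$ whenever $A_i > 0$, so the domain $\re^n_A$ and the weight $x^A$ are unchanged), pass to polar coordinates to factor the integral into a radial Gamma integral times the $\al$-independent angular integral $S(A) = \int_{\pd B_1 \cap \re^n_A} \omega^A \, dS_\omega$, and deduce \eqref{G2} from \eqref{G1} by differentiating in $t$ under the integral sign. The difference lies in how the angular constant is evaluated. The paper does not compute it: it cites Cabr\'e and Ros-Oton \cite{Cabre-RosOton} for the two facts $P(B_A) = D\, m(B_A)$ and $m(B_A) = \Pi(A)^{D/2}/\Gamma(\frac{D}{2}+1)$, which together give the value of $S(A) = P(B_A)$. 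You instead determine $S(A)$ self-containedly: you compute the Gaussian case $\al = 2$ by separation of variables --- the factor $\Gamma(\frac{1}{2})\, t^{-1/2}$ on each unweighted axis and $\frac{1}{2}\Gamma(\frac{A_i+1}{2})\, t^{-(A_i+1)/2}$ on each weighted half-axis, whose product is exactly $\Pi(A)^{D/2} t^{-D/2}$ by \eqref{Pi} --- and then compare with the polar factorization at $\al = 2$ to read off $S(A) = D\, \Pi(A)^{D/2}/\Gamma(\frac{D}{2}+1)$. This is the classical Gaussian trick for evaluating surface measures, adapted to monomial weights; it buys a proof with no external input (and in fact rederives the Cabr\'e--Ros-Oton formula for $m(B_A)$ as a byproduct), at the cost of one extra elementary computation, whereas the paper's citation keeps the proof shorter. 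Your bookkeeping is right throughout: the powers of $t$ sum to $-D/2$ since $\sum_i (A_i+1) = D$, the constants assemble to $2^{-k}\prod_{i=1}^n \Gamma(\frac{A_i+1}{2}) = \Pi(A)^{D/2}$, and the differentiation step for \eqref{G2} is exactly what the paper does.
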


\begin{proof}
Since \eqref{G2} is derived from \eqref{G1} by differentiating it with respect to $t$, we prove \eqref{G1} only.
Put $x - x_0 = y$. Then we check that $x \in \re^n_A$ is equivalent to $y \in \re^n_A$ and
\begin{align*}
	x^A dx &= |x_1|^{A_1} \cdots |x_n|^{A_n} dx_1 \cdots dx_n \\
	&= |y_1 + (x_0)_1|^{A_1} \cdots |y_n + (x_0)_n|^{A_n} dy_1 \cdots dy_n \\
	&= |y_1|^{A_1} \cdots |y_n|^{A_n} dy_1 \cdots dy_n \\
	&= y^A dy
\end{align*}
since $(x_0)_i = 0$ if $A_i > 0$.
Thus it is enough to prove \eqref{G1} when $x_0 = (0, \cdots, 0)$.

Let $B_A = \re^n_A \cap B_1$ 
and put $x = (x_1, \cdots, x_n) = r \omega$, where $r = |x|$ and $\omega = (\omega_1, \cdots, \omega_n)$ be the unit vector in $\pd B_A = \pd B_1 \cap \re^n_A$.
Note that $\pd B_A$ is the curved part of the boundary portion of $B_A$.
Then $x^A = (r\omega)^A = (r\omega_1)^{A_1} \cdots (r\omega_n)^{A_n}$ and
\[
	x^A dx = (r\omega)^A r^{n-1} dr dS_\omega = r^{A_1 + \cdots + A_n + n-1} \omega^A \,dr \,dS_{\omega},
\]
where $dS_{\omega}$ denotes the surface measure on
 $\pd B_A$.
We calculate
\begin{align*}
	\int_{\re^n_A} e^{-|x|^{\al}} x^A dx &= \int_{\pd B_A} \int_0^{\infty} e^{-r^\al} (r \omega)^A r^{n-1} dr dS_{\omega} \\
	&= \( \int_{\pd B_A} \omega^A dS_{\omega} \) \int_0^{\infty} e^{-r^\al} r^{D-1} dr \\
	&= P(B_A) \frac{1}{\al} \Gamma\(\frac{D}{\al}\),
\end{align*}
where $P(B_A) = \int_{\pd B_A} \omega^A dS_{\omega}$.
As observed in \cite{Cabre-RosOton} (Theorem 1.4 and Lemma 4.1), $P(B_A) = D m(B_A)$ and 
\[
	m(B_A) = \int_{B_A} x^A dx = \frac{\Pi(A)^{\frac{D}{2}}}{\Gamma(\frac{D}{2}+1)}.
\]
Thus we obtain
\begin{align*}
	\int_{\re^n_A} e^{-|x|^{\al}} x^A dx = \frac{\Gamma\(\frac{D}{\al} + 1\)}{\Gamma\(\frac{D}{2}+1\)} \Pi(A)^{\frac{D}{2}}.
\end{align*}
The transformation $t^{1/\al} x = y$ for $x \in \re^n_A$ yields \eqref{G1}.
\end{proof}

\begin{lemma}\label{Lemma2}
Let $\al>1,\sigma>0$.
Let $x_0 = ((x_0)_1, \cdots, (x_0)_n) \in \re^n$ be such that $(x_0)_i = 0$ if $A_i > 0$ for $1 \le i \le n$.  
Then we have
\begin{equation}
\label{C1}
	\int_{\re^n_A} \frac{1}{(1 +\sigma |x - x_0|^{\al})^{\beta}}\,x^A \,dx = \frac{2}{\al}\frac{\Pi(A)^{\frac{D}{2}}}{\sigma^{\frac{D}{\al}}}\frac{\Gamma(\frac{D}{\al})}{\Gamma(\frac{D}{2})} \frac{\Gamma(\beta - \frac{D}{\al})}{\Gamma(\beta)} \quad \hbox{ for } \al \beta > D.
\end{equation}
for any $x_0 \in \re^n$.
%In particular,
%\[
%	\int_{\re^n} \frac{1}{(1 +\sigma|x-x_0|^\al)^{\beta}} dx= \frac{2}{\al}\frac{\pi^{\frac{n}{2}}}{\sigma^{\frac{n}{\al}}}\frac{\Gamma(\frac{n}{\al})}{\Gamma(\frac{n}{2})} \frac{\Gamma(\beta - \frac{n}{\al})}{\Gamma(\beta)}  \quad \hbox{ for } \beta>\frac{n}{\al}
%\]
%for any $x_0 \in \re^n$.
\end{lemma}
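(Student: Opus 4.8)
The plan is to mimic the two-step argument used for Lemma \ref{Lemma1}: reduce to the centered case by translation, then pass to weighted polar coordinates and recognize the remaining radial integral as a Beta integral. First I would set $y = x - x_0$. Since $(x_0)_i = 0$ whenever $A_i > 0$, the very computation carried out in the proof of Lemma \ref{Lemma1} shows that $x \in \re^n_A \iff y \in \re^n_A$ and $x^A\,dx = y^A\,dy$, so without loss of generality I may take $x_0 = (0, \dots, 0)$.

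Next, writing $x = r\omega$ with $r = |x|$ and $\omega \in \pd B_A$, we have $x^A\,dx = r^{D-1}\omega^A\,dr\,dS_\omega$, whence
\[
	\int_{\re^n_A} \frac{x^A}{(1+\sigma|x|^\al)^\beta}\,dx = \( \int_{\pd B_A} \omega^A\,dS_\omega \) \int_0^\infty \frac{r^{D-1}}{(1+\sigma r^\al)^\beta}\,dr = P(B_A)\int_0^\infty \frac{r^{D-1}}{(1+\sigma r^\al)^\beta}\,dr.
\]
As recalled in the proof of Lemma \ref{Lemma1}, $P(B_A) = D\,m(B_A)$ with $m(B_A) = \Pi(A)^{D/2}/\Gamma(\frac{D}{2}+1)$; using $\Gamma(\frac{D}{2}+1) = \frac{D}{2}\Gamma(\frac{D}{2})$ this simplifies to $P(B_A) = 2\,\Pi(A)^{D/2}/\Gamma(\frac{D}{2})$, which already explains the factor $\frac{2}{\al}$ and the quotient $\Gamma(\frac{D}{\al})/\Gamma(\frac{D}{2})$ appearing on the right-hand side of \eqref{C1}.

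Finally, for the radial integral I would substitute $s = \sigma r^\al$, so that $dr = \frac{1}{\al}\sigma^{-1/\al}s^{1/\al - 1}\,ds$; collecting the powers of $s$ and $\sigma$ turns it into $\frac{1}{\al}\sigma^{-D/\al}\int_0^\infty s^{\frac{D}{\al}-1}(1+s)^{-\beta}\,ds$. This last integral is the classical Beta integral $\int_0^\infty s^{a-1}(1+s)^{-(a+b)}\,ds = \Gamma(a)\Gamma(b)/\Gamma(a+b)$ with $a = \frac{D}{\al}$ and $b = \beta - \frac{D}{\al}$. The only genuinely non-routine point is its convergence: it requires $a>0$ and $b>0$, i.e. $\al\beta > D$, which is exactly the standing hypothesis and guarantees integrability of the original integrand both at the origin and at infinity. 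Substituting the value $\Gamma(\frac{D}{\al})\Gamma(\beta - \frac{D}{\al})/\Gamma(\beta)$ and multiplying by $P(B_A) = 2\,\Pi(A)^{D/2}/\Gamma(\frac{D}{2})$ yields \eqref{C1} after an elementary rearrangement of the Gamma factors.
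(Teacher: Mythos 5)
Your proposal is correct and follows essentially the same route as the paper's own proof: reduction to $x_0 = 0$ by the translation argument of Lemma \ref{Lemma1}, weighted polar coordinates with $P(B_A) = D\,m(B_A) = 2\,\Pi(A)^{D/2}/\Gamma(\tfrac{D}{2})$, and evaluation of the radial integral as a Beta integral. The only (cosmetic) difference is the substitution: the paper rescales $s = \sigma^{1/\al} r$ and then invokes the standard formula for $\int_0^\infty s^{D-1}(1+s^\al)^{-\beta}\,ds$, whereas you substitute $s = \sigma r^{\al}$ to land directly on $\int_0^\infty s^{D/\al-1}(1+s)^{-\beta}\,ds$; both yield the same Gamma-function expression under the same condition $\al\beta > D$.
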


\begin{proof}
As in the same reason in the preceding lemma, we may consider only the case $x_0 = (0, \cdots, 0)$.
Then we derive \eqref{C1} by using the ``polar coordinates" again.
As in the proof of the former lemma, we compute
\begin{align*}
	&\int_{\re^n_A} \frac{x^A dx}{(1 +\sigma |x|^\al)^{\beta}} = \int_{\pd B_A} \int_0^{\infty} \frac{(r \omega)^A r^{n-1} dr dS_{\omega}}{(1 + \sigma r^\al)^{\beta}} \\
	&= \( \int_{\pd B_A} \omega^A dS_{\omega} \) \int_0^{\infty} \frac{r^{D-1}}{(1 + \sigma r^\al)^{\beta}} dr
	= P(B_A) \(\frac{1}{\sigma} \)^{\frac{D}{\al}} \int_0^{\infty} \frac{s^{D-1}}{(1 + s^\al)^{\beta}} ds \\
	&= D \frac{\Pi(A)^{\frac{D}{2}}}{\Gamma(\frac{D}{2}+1)} \(\frac{1}{\sigma} \)^{\frac{D}{\al}} \frac{\Gamma(\frac{D}{\al}) \Gamma(\beta - \frac{D}{\al})}{\al\Gamma(\beta)}
	= \frac{2}{\al}\frac{\Pi(A)^{\frac{D}{2}}}{\sigma^{\frac{D}{\al}}}\frac{\Gamma(\frac{D}{\al})}{\Gamma(\frac{D}{2})} \frac{\Gamma(\beta - \frac{D}{\al})}{\Gamma(\beta)},
\end{align*}
which proves lemma.
\end{proof}

\section{Sketch of the proof of Theorem A for $p=2$.}

We will give some details of proof of Theorem A in \cite{Ng}
because we need them in what follows.

By density argument it is enough to prove the result for functions $f\in C_c^1(\re^n)$.

In the paper by Cabr\'e and Ros-Oton \cite{Cabre-RosOton}, 
the Sobolev inequality with monomial weights is proved, and in \cite{Ng} the inequality is generalized to any norm on $\re^n$.
When we focus on the Euclidean norm and $1 < p < D$ case,
the Sobolev inequality with monomial weights proved in \cite{Cabre-RosOton} and \cite{Ng} reads as follows:

\vspace{1em}\noindent
{\bf Theorem B.}{\rm (Sharp $L^p$-Sobolev inequality with monomial weights \cite{Cabre-RosOton}, \cite{Ng})}
{\it
Let $A = (A_1, A_2, \dots, A_n)$ be a nonnegative vector in $\re^n$, $D=n+A_1+\cdots+A_n$, $1 < p < D$ and $p_* = \frac{Dp}{D-p}$.
Then the inequality
\begin{equation}\label{Sobolev weight}
	\( \int_{\re^n_A} |f|^{p_{*}}\,x^A\, dx \)^{1/p_{*}} \le C_{p,n,A} \( \int_{\re^n_A} |\nabla f|^p \,x^A \,dx \)^{1/p}
\end{equation}
holds true for any $f \in W^{1,p}(\re^n, x^A dx)$, where
\begin{equation}
\label{CpnA}
	C_{p,n,A}= D^{-\frac{1}{p}-\frac{1}{D}} \( \frac{p-1}{D-p} \)^{\frac{1}{p^{\prime}}} \( \frac{p^{\prime}\Gamma(D)}{\Gamma\(\frac{D}{p}\)\Gamma\(\frac{D}{p^{\prime}}\) m(B_A)} \)^{\frac{1}{D}}.
\end{equation}
%Moreover \eqref{Sobolev weight} holds for any smooth compactly supported functions when $p=1$ with
%\[
%	C_{1,n,A}= D^{-1}m(B_A)^{-1/D} = D^{-1} \( \frac{\Gamma\(1 + \frac{D}{2}\)}{\Pi(A)^{\frac{D}{2}}} \)^{\frac{1}{D}}. % \quad \text{for} \quad p = 1,
%\]
Define
\[
	h_{p,A}(x) = (\sigma_{p,A} + |x|^{p'})^{1-D/p} 
%	\begin{cases}
%	(\sigma_{p,A} + |x|^{\frac{p}{p-1}})^{1-D/p}, & \mbox{ if } p>1 , \\
%	\frac{\chi_{B_A}}{m(B_A)^{\frac{D-1}{D}}} & \mbox{ if } p=1,
%	\end{cases}
\]
where
%\[
%	\sigma_{p,A}=\left[\frac{\Gamma(D/p)\Gamma(D/q+1)}{\Gamma(D)}m(B_A))\right]^{p/D}
%\]
$\sigma_{p,A}$ is the normalized constant such that $\int_{\re^n_A} |h_{p,A}|^{p_{*}}\,x^A\, dx=1$.
Then the equality in \eqref{Sobolev weight} for $1 < p < D$ holds if and only if 
\[
	f(x)=ch_{p,A}(\lambda(x-x_0))
\]
where $c\in\mathbb{R}$,$\lambda>0$, and $x_0 = ((x_0)_1, \cdots, (x_0)_n)\in \mathbb{R}^n$ such that $(x_0)_i=0$ if $A_i>0$.

%Inequality \eqref{Sobolev weight} for $p=1$ extends to all functions of bounded weighted variation and equality holds 
%if $f(x)=ch_{1,A}(\lambda(x-x_0))$ where $c \in \re, \la > 0, x_0 \in \re^n$ as above.
}

%The best constant $C_{1,n,A}$ is the inverse of the corresponding best constant of the isoperimetric inequality with the monomial weight:
%\begin{equation}\label{Dis Isop}
%	\frac{P(\Omega)}{m(\Omega)^{\frac{D-1}{D}}} \ge \frac{P(B_A)}{m(B_1^A)^{\frac{D-1}{D}}} = C_{1,n,A}^{-1},
%\end{equation}
%where $\Omega \subset \re^n$ is a bounded Lipschitz domain,
%\[
%	P(\Omega) = \intdO x^A d\mathcal{H}^{n-1}(x)
%\]
%is a weighted perimeter of $\Omega$ where $\mathcal{H}^{n-1}$ is the $(n-1)$-dimensional Hausdorff measure.

\vspace{1em}
From now on, we focus only on the case $p = 2$.

Let $N \in \N$ and let $B = (B_1, \dots, B_N) \in \re^N$ be a nonnegative vector.
Let us take $F \in C_c^1(\re^N)$ with $\int_{\re^N_B} |F(z)|^2 z^B dz = 1$ and let us denote $D_B = N + B_1 + \cdots + B_N$ and $2_*(B) = \frac{2D_B}{D_B-2}$.
Then the sharp $L^2$-Sobolev inequality \eqref{Sobolev weight} for $F$ yields that
\begin{equation}
\label{L2}
%	\!\!\log \!\( \!\int_{\re^N_B} |F(z)|^{2_*(B)-2} |F(z)|^2 z^B dz \!\)^{\!\frac{1}{2_*(B)-2}}\!\!\! \le \! \log \!\(\! C_{2,N,B}^2 \int_{\re^N_B} |\nabla F(z)|^2 z^B dz \!\)^{\!\frac{D_B}{4}}\!\!\!.\!\!
	\( \!\int_{\re^N_B} |F(z)|^{2_*(B)} z^B dz \!\)^{\!\frac{1}{2_*(B)-2}}\!\!\! \le \! \(\! C_{2,N,B}^2 \int_{\re^N_B} |\nabla F(z)|^2 z^B dz \!\)^{\!\frac{D_B}{4}}\!\!\!.\!\!
\end{equation}
Let us consider a nonnegative vector $A = (A_1, \dots, A_n) \in \re^n$. We express $N = l n$ for $l \in \N$.
For a function $f \in C_c^1(\re^n)$ satisfying $\int_{\re^n_A} |f(x)|^2 x^A dx = 1$,
we put
\[
	B = (\underbrace{A, A, \dots, A}_{l}) \in \re^{l n} = \re^N \quad \text{ and }\quad F(z) = \prod_{i=1}^l f(x^i),
\]
where $x^i = (x^i_1, \dots, x^i_n) \in \re^n$ for each $i = 1,2,\dots,l$,
and $z = (x^1, x^2, \cdots, x^l) \in \re^{ln} = \re^N$.
%Note that for $z = (z_1, \dots, z_N) = (x^1, x^2, \dots, x^l) \in \re^N$ and $B = (A, A, \dots, A) \in \re^N$ as above,
%we have the product structure of the space
%\[
%	\re^N_B = \re^n_A \times \dots \times \re^n_A
%\]
%and of the weight
%$$
%	z^B=(x^1)^A \times \dots \times (x^l)^A = \prod_{i=1}^l (x^i)^A.
%$$
%%\begin{align*}
%%	&z^B = z_1^{B_1} \times \dots \times z_N^{B_N} \\
%%	&= \left\{ (x^1_1)^{A_1} \times \dots \times (x^1_n)^{A_n} \right\} \times \dots \times \left\{ (x^l_1)^{A_1} \times \dots \times (x^l_n)^{A_n} \right\} \\
%%	&=(x^1)^A \times \dots \times (x^l)^A = \prod_{i=1}^l (x^i)^A, \\
%%	&D_B = N + B_1 + \dots + B_N = ln + l(A_1 + \dots + A_n) = l D,
%%\end{align*}
%Moreover we stress that
%$$D_B = N + B_1 + \dots + B_N = ln + l(A_1 + \dots + A_n) = l D,$$
%where $D = n + A_1 + \dots + A_n$.
By a direct computation we get
\begin{align}
	&\int_{\re^N_B} |F(z)|^t z^B dz = \prod_{i=1}^l \int_{\re^n_A} |f(x^i)|^t (x^i)^A dx^i, \quad \forall t \ge 1, \label{R1} \\
	&\int_{\re^N_B} |\nabla F(z)|^2 z^B dz = l \int_{\re^n_A} |\nabla f(x)|^2 x^A dx. \label{R2}
\end{align}
Also note that $D_B = l D$.
Then \eqref{L2} becomes, after taking $1/l$-th root of both sides,
\begin{equation}\label{51}
%	\log \left\{ \( \int_{\re^n_A} |f(x)|^{2_*(B)-2} |f(x)|^2 x^A dx \)^{\frac{1}{2_*(B)-2}} \right\} \le \frac{D}{4} \log \( l C_{2,ln,B}^2 \int_{\re^n_A} |\nabla f(x)|^2 x^A dx \)
	\( \int_{\re^n_A} |f(x)|^{2_*(B)-2} |f(x)|^2 x^A dx \)^{\frac{1}{2_*(B)-2}} \le \( l C_{2,N,B}^2 \int_{\re^n_A} |\nabla f(x)|^2 x^A dx \)^{D/4}.
\end{equation}
Now, by \eqref{CpnA} and $D_B = lD$, we have
%\begin{align*}
%	&l C_{2,N,B}^2 = l ( l D )^{-1-2/(l D)} \frac{2^{2k/D} \( \Gamma(1 + \frac{l D}{2}) \)^{2/(lD)}}{\( \prod_{i=1}^n \Gamma(\frac{A_i+1}{2}) \)^{2/D}} \( \frac{1}{lD-2} \) \( 2 \frac{\Gamma(l D)}{\Gamma^2(\frac{l D}{2})} \)^{\frac{2}{l D}} \\
%	&= \( \frac{1}{D} \) \( \frac{2^k}{\prod_{i=1}^n \Gamma(\frac{A_i+1}{2})} \)^{2/D} \frac{1}{lD-2} \( \frac{\Gamma(l D)}{\Gamma(\frac{l D}{2})} \)^{\frac{2}{l D}}.
%\end{align*}
$$
%	l C_{2,N,B}^2= \frac{1}{D} \left[ \frac{2^k}{\prod_{i=1}^n \Gamma(\frac{A_i+1}{2})} \right]^{2/D} \frac{1}{lD-2} \left[ \frac{\Gamma(l D)}{\Gamma(\frac{l D}{2})} \right]^{\frac{2}{l D}}.
	l C_{2,N,B}^2= \frac{1}{D} \(\frac{1}{\Pi(A)}\)^{2/D} \frac{1}{lD-2} \left[ \frac{\Gamma(l D)}{\Gamma(\frac{l D}{2})} \right]^{\frac{2}{l D}}.
$$
Let $l \to \infty$ in the above equality.
Stirling's formula
\begin{equation}
\label{Stirling}
%	\Gamma(s+1) = [1 + o(1)] \(s^s e^{-s} \sqrt{2\pi s} \) \quad \text{as} \quad s \to \infty,
	\Gamma(s) = [1 + o(1)] \(\sqrt{2\pi} s^{s-1/2} e^{-s} \)  \quad (s \to \infty)
\end{equation}
implies that
%\[
%	\left\{ \frac{\Gamma(lD)}{\Gamma(\frac{lD}{2})} \right\}^{\frac{2}{l D}} = 2lD e^{-1}(1 + o(1)), \quad l \to \infty
%\]
%and
\begin{equation}\label{C l}
	\lim_{l \to \infty} l C_{2, ln, B}^2 = \mathcal{L}_2(A)
\end{equation}
where $\mathcal{L}_2(A) = \frac{2}{\Pi(A) e D}$ is defined in \eqref{L}.
Note that $2^*(B)-2 = \frac{4}{lD -2} \to 0$ as $l \to \infty$.
Then Theorem A follows by taking a limit $l \to \infty$ in \eqref{51} using Lemma \ref{Lemma0} with $d\mu = |f(x)|^2 x^A dx$ and \eqref{C l}.
\qed

%\begin{remark}
%We stress that it is the sharp asymptotics rather than the precise expression of the Sobolev embedding constant that determines the best constant in the logarithmic Sobolev inequality. 
%Indeed the constant in Theorem A is such that $lC^p_{p,ln,B}\sim \mathcal{L}_{D,A}(p)$ as $l\rightarrow +\infty$, \textit{i.e.} \eqref{C l}.
%\end{remark}

\begin{remark}
In the previous proof if we consider the general exponent $1 < p < \infty$, then
\eqref{R1} still holds true but \eqref{R2} (with the exponent $2$ replaced by $p$) does not hold for $p\neq2$. 
In order to overcome this difficulty, the author in \cite{Ng} first establishes as a starting point 
the sharp Sobolev inequality that involves a general norm with the exact information of extremals,
and uses \eqref{R2} having different norms on both sides; see \cite{Ng} for details (or compare \eqref{R1}, \eqref{R2} to \eqref{R1_p}, \eqref{R2_p} in \S 5.) 
\end{remark}

\begin{remark}
By Lemma \ref{Lemma1} with $\alpha = 2$ and $t = 1/2$, 
we easily check that the equality in \eqref{ELSW2} holds for $f(x)=\frac{e^{-\frac{|x|^2}{4}}}{\left(2\Pi(A)\right)^{\frac{D}{4}}}$
which satisfies that $\int_{\re^n_A} |f|^2 x^A\,dx=1$ and $\int_{\re^n_A} |x|^2|f|^2 x^A\,dx=D$.
\end{remark}

%\begin{remark}
%We stress that the isoperimetric inequality \eqref{Dis Isop}, or equivalently \eqref{Sobolev weight} with $p=1$ implies \eqref{log-Sobolev}.
%Indeed, let $U:\re^N\rightarrow\re$ be such that $\int_{\re^N_B} |U(z)| \,z^B dz =1$.
%Jensen's inequality and  \eqref{Sobolev weight} with $p=1$ imply that
%$$
%	\int_{\re^N_B} |U(z)| \log |U(z)| \, z^B dz \le D_B \log\left[ C_{1,N,B}\int_{\re^N_B} |||\nabla U(z)|||_* z^B dz\right].
%$$
%Taking $U(z)= F^p(z) = \prod_{i=1}^lf^p(x^i)$ where $F$ is in \eqref{F=} with $\|f\|_{L^p(\re^n_A, x^A\,dx)}=1$,
%and using H\"{o}lder's inequality, the property of logarithm function and \eqref{R2}
%we have
%$$
%	\log \int_{\re^N_B} |F(z)|^2 \log |F(z)|^2  z^B dz \le D_B \log\left[ 2C_{1,N,B}\(\int_{\re^N_B} |\nabla F(z)|^2 z^B dz\)^{\frac{1}{2}}\right].
%$$
%namely
%$$
%	l \int_{\re^n_A} |f(x)|^p \,\log |f(x)|^p \, x^A dx  \le \frac{{l D}}{p}  \log \left[ p^pC^p_{1,N,B} l \int_{\re^n_A} \|\nabla f(x)\|_*^p x_A dx \right].
%$$
%By Stirling formula \eqref{Stirling}, the inequality \eqref{log-Sobolev} follows.
%\end{remark}

\section{Proof of Theorem \ref{Theorem:equality}}

In this section, we prove Theorem \ref{Theorem:equality}. 
As explained in Introduction, our basic idea stems from the probabilistic point of view, see \cite{Bob} 

It is easy to check that functions of the form $\beta e^{-\frac{|x-x_0|^2}{\sigma}}$ for some $\sigma>0$, $x_0\in \mathbb{R}^n$ such that the component $(x_0)_i=0$ if $A_i>0$, 
give the equality in the $L^2$-logarithmic Sobolev inequality \eqref{ELSW2}.
We want to prove that they are the only extremals.
In order to do so we characterize the equality cases in every inequality in the proof of Theorem A.
Without loss of generality we may consider only positive functions.
Recall that extremals in the classical Sobolev inequality are all given by \eqref{classic equality}, see \cite{Talenti}.
In the following, we will use the notation $\alpha_l \sim \beta_l$ if $\lim_{l \to \infty} \frac{\alpha_l}{\beta_l} = 1$.
In order to have the equality for some $F_l(z)$ in \eqref{L2},
it is necessary that, as $l \to \infty$,
\begin{equation}
\label{start}
	F_l(z) \sim a_l(1 + b_l|z-z_0^l|^2)^{1-\frac{D_B}{2}} \quad \text{ for every } z\in\mathbb{R}^N_B
\end{equation}
with some $a_l>0$, $b_l>0$, and $z_0^l = ((z_0^l)_1, \cdots, (z_0^l)_N) \in \re^N$ such that $(z_0^l)_i = 0$ if $B_i > 0$ for all $1 \le i \le N$.
Here we have used the notation which emphasizes the dependence on $l$ of involved functions and constants.
By translation invariance, we may fix $z_0^l = z_0$ for a fixed point $z_0 = ((z_0)_1, \cdots, (z_0)_N) \in \re^N$ such that $(z_0)_i = 0$ if $B_i > 0$.
Also recalling that we consider function with $\int_{\re^N_B} |F_l(z)|^2 z^B dz = 1$, 
we have by \eqref{start} and Lemma \ref{Lemma2},
\begin{align}
\label{41}
	a_l^2 \int_{\re^N_B} (1 + b_l |z-z_0|^2)^{2-D_B} z^B dz &=  a_l^2 \( \frac{\Pi(B)}{b_l} \)^{D_B/2} \frac{\Gamma(D_B/2-2)}{\Gamma(D_B-2)} \sim 1
\end{align}
as $l \to \infty$.
Note that $\Pi(B) = \Pi(A)$ by definition \eqref{Pi} and $D_B = l D$.
Thus by \eqref{41} and Stirling's formula \eqref{Stirling}, we see
\begin{align}
	(a_l)^{1/l} &\overset{\eqref{41}}{\sim}  \( \frac{b_l}{\Pi(B)} \)^{\frac{D}{4}} \( \frac{\Gamma(D_B-2)}{\Gamma(D_B/2-2)} \)^{\frac{1}{2l}} \notag \\
	&\overset{\eqref{Stirling}}{\sim}  \( \frac{b_l}{\Pi(A)} \)^{\frac{D}{4}} \( e^{-\frac{lD}{2}} 2^{(lD-5)/2} (lD)^{\frac{lD}{2}} \)^{\frac{1}{2l}} \notag \\
	&\sim  \( \frac{2 D}{e \Pi(A)} l b_l \)^{\frac{D}{4}} \quad (l \to \infty). \label{all}
\end{align}
Recall $F_l(z) = \Pi_{i=1}^l f_l(x^i)$ where $z = (x^1, \cdots, x^l) \in \re^{nl} = \re^N$, $x^i \in \re^n$ $(i=1,\cdots, l)$.
We choose
$$
	z=(x,\cdots,x), \quad z_0=(x_0,\cdots,x_0) \quad \text{with } x,x_0\in\re^n.
$$
Then it follows that $|z - z_0| = \sqrt{l} |x - x_0|$ and
\begin{equation}
\label{42}
	f_l(x) = (F_l(z))^{1/l} \overset{\eqref{41}}{\sim} (a_l)^{1/l} (1 + l b_l |x-x_0|^2)^{1/l-D/2} \quad \forall x \in \re^n
\end{equation}
as $l \to \infty$.
Note that $(f_l(x_0))^l = F_l(z_0) \sim a_l$ as $l \to +\infty$.

We have three possible behaviors of the sequence $b_l$ as $l \to \infty$:
\begin{itemize}
\item [i)] $b_l\rightarrow+\infty,$
\item [ii)] $b_l\rightarrow \bar{b}\in \mathbb{R}-\{0\},$
\item [iii)]$b_l\rightarrow 0.$
\end{itemize}
Indeed if the limit does not exist, then we can argue one of the previous cases up to a subsequence.

If $b_l \to +\infty$, we have the contradiction since 
\begin{align*}
	f_l(x) &\overset{\eqref{42}}{\sim} (a_l)^{1/l} (1 + l b_l |x-x_0|^2)^{1/l-D/2} \\
	&\overset{\eqref{all}}{\sim} \( \frac{2 D}{e \Pi(A)} l b_l \)^{\frac{D}{4}} (1 + l b_l |x-x_0|^2)^{1/l-D/2} \\
	&\sim (b_l)^{-D/4 + 1/l} \ l^{-D/4 + 1/l} \( \frac{2 D}{e \Pi(A)} \)^{\frac{D}{4}} \to 0
\end{align*}
as $l \to +\infty$ by \eqref{all} for any $x \in \re^n$,
which is absurd by the restriction $\int_{\re^n} |f_l(x)|^p dx = 1$.

Also if $b_l \to \bar{b} \in (0, +\infty)$,
\begin{align*}
	f_l(x) % &\sim (a_l)^{1/l} (1 + l b_l |x-x_0|^2)^{1/l-D/2} \\
	&\sim \( \frac{2 D}{e \Pi(A)} l b_l \)^{\frac{D}{4}} (1 + l b_l |x-x_0|^2)^{1/l-D/2} \\
	&\sim (\bar{b})^{-D/4} l^{-D/4 + 1/l}  \( \frac{2 D}{e \Pi(A)}\)^{D/4} \to 0
\end{align*}
as $l \to +\infty$ by \eqref{all} for any $x \in \re^n$, again a contradiction.

Thus the only possible case is the third one and we have $b_l \to 0$ as $l \to \infty$.

Next, we choose
$$
	z=(x,\underbrace{x_0 \cdots,x_0}_{l-1}), \quad z_0=(x_0,\cdots,x_0) \quad \text{with } x,x_0\in\re^n.
$$
Then it follows that $|z - z_0| = |x - x_0|$ and since $F_l(z) = f_l(x) (f_l(x_0))^{l-1} \sim (a_l)^{\frac{l-1}{l}} f_l(x)$, 
we have by \eqref{start} that
\begin{equation}
\label{43}
	f_l(x) \sim (a_l)^{1/l} (1 + b_l |x-x_0|^2)^{1-D_B/2}, \quad \forall x \in \re^n
\end{equation}
as $l \to \infty$.
In this case, by $\log(1 + t) = t + o(1)$ as $t \to +0$, we see
\begin{align}
	f_l(x) &\overset{\eqref{43}}{\sim} (a_l)^{1/l} (1 + b_l |x-x_0|^2)^{1-D_B/2} \notag \\
	&= (a_l)^{1/l} \exp \left\{ (1 - D_B/2) \log (1 +  b_l |x-x_0|^2) \right\} \notag \\
	&\sim (a_l)^{1/l} \exp \left\{ (1 - D_B/2)   b_l |x-x_0|^2) \right\}. \label{44}
\end{align}

Again three cases (up to a subsequence) are possible for the behaviors of the sequence $lb_l$,
\begin{enumerate}
\item[ \textbf{(iii-1)}] $lb_l\rightarrow+\infty$
\item[ \textbf{(iii-2)}] $lb_l\rightarrow 0$
\item[ \textbf{(iii-3)}] $lb_l\rightarrow \widetilde{b}\in \mathbb{R}-\{0\}.$
\end{enumerate}

The first case (iii-1) does not occur since by \eqref{all} and $D_B = lD$, we have
\begin{align*}
	f_l(x) &\overset{\eqref{44}}{\sim} (a_l)^{1/l} \exp \( (1 - D_B/2)   b_l |x-x_0|^2) \) \\
	&\overset{\eqref{all}}{\sim}  \( \frac{2 D}{e \Pi(A)} l b_l \)^{\frac{D}{4}} \exp \( (1/l-D/2) l b_l |x-x_0|^2) \) \to 0
\end{align*}
as $l \to +\infty$ for any $x \in \re^n$ if $l b_l \to +\infty$.

Also the second case (iii-2) does not occur, since if it would happen, then
$a_l^{1/l} \overset{\eqref{all}}{\sim}  \( \frac{2 D}{e \Pi(A)} l b_l \)^{\frac{D}{4}} \to 0$ as $l \to \infty$ and
\begin{align*}
	f_l(x) \sim (a_l)^{1/l} (1 + b_l |x-x_0|^2)^{1-(lD)/2} \to 0
\end{align*}
as $l \to +\infty$ for any $x \in \re^n$.

Thus the only case to be considered is the third one: $lb_l\rightarrow \widetilde{b}\in \mathbb{R}-\{0\}$.
In this case, we have
\begin{align*}
	f_l(x) &\overset{\eqref{44}}{\sim} (a_l)^{1/l} \exp \( (1 - D_B/2)   b_l |x-x_0|^2) \) \\
	&\overset{\eqref{all}}{\sim}  \( \frac{2 D}{e \Pi(A)} l b_l \)^{\frac{D}{4}} \exp \( (1/l -D/2)  l b_l |x-x_0|^2) \) \\
	&\sim  \( \frac{2 D}{e \Pi(A)} \bar{b} \)^{\frac{D}{4}} \exp \( (-D/2)  \bar{b} |x-x_0|^2) \)
\end{align*}
as $l \to +\infty$. This is the desired conclusion.

\qed

\section{On the equality case for the unweighted sharp $L^p$-logarithmic Sobolev inequality with arbitrary norm.}

In this section, we characterize the equality case for \eqref{ELSWp} for $1 < p < \infty$ with an arbitrary norm, but the unweighted case $A = (0, \cdots, 0)$.
We fix $n \in \N$.
Let $\| \cdot \|$ be any norm on $\re^n$ and let $\| \cdot \|_*$ be its dual norm.
%\[
%	\| \xi \|_* = \sup_{\| x \| \le 1} x \cdot \xi.
%\]
Put $B^n = \{ x \in \re^n : \| x \| < 1 \}$.
We put $N = nl$ for some $l \in \N$.
Later, we let $l$ tend to $\infty$.
Let $1 < p < \infty$.
For $z = (x^1, \cdots, x^l) \in (\re^n)^l = \re^N$, $x^i \in \re^n$ for $1 \le i \le l$, define a new norm as in \cite{Ng}
\[
	||| z ||| = \( \sum_{i=1}^l \| x^i \|^{p'} \)^{1/{p'}}
\]
where $1/p + 1/p' =1$.
Let $\tilde{B}^N = \{ z \in \re^N : |||z||| < 1 \}$ be the unit ball in $\re^N = \re^{nl}$ with respect to the norm $||| \cdot |||$,
and let $B^n = \{ x \in \re^n : \| x \| < 1 \}$ be the unit ball in $\re^n$ with respect to the norm $\| \cdot \|$, respectively.

\vspace{1em}
As in the proof of Theorem \ref{Theorem:equality}, we start from some computational lemmas.

\begin{lemma}\label{Lemma5.1}
Let $N = nl$ and put $A_l = \int_{\tilde{B}^N} 1 dz$ be the volume of $\tilde{B}^N = \{ z \in \re^N : |||z||| < 1 \}$. 
Then we have
\begin{equation}
\label{Al}
	A_l = \frac{(\frac{n}{p'})^{l-1} (m(B^n))^l (\Gamma(\frac{n}{p'}))^l}{l \Gamma(\frac{nl}{p'})}
\end{equation}
for $l \in \N$, where $m(B^n) = \int_{B^n} 1 dx$ is the volume of $B^n = \{ x \in \re^n : \| x \| < 1 \}$. 
\end{lemma}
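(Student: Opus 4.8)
The plan is to compute the volume $A_l$ by passing to ``polar coordinates'' adapted to the norm $\| \cdot \|$ on each of the $l$ copies of $\re^n$, thereby reducing the $N$-dimensional integral to a classical Dirichlet integral over a simplex, which is evaluated in closed form by Gamma functions.

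First I would record the radial decomposition for a single block. Since the dilation $x \mapsto Rx$ carries $B^n$ onto $\{ \| x \| < R \}$ with Jacobian $R^n$, we have $\mathrm{vol}(\{\|x\| < R\}) = R^n m(B^n)$, so writing $x = r \omega$ with $r = \|x\|$ and $\omega \in \partial B^n$ gives, for any $g$ depending only on $\|x\|$,
\[
	\int_{\re^n} g(\|x\|)\, dx = n\, m(B^n) \int_0^\infty g(r)\, r^{n-1}\, dr.
\]
Applying this to each variable $x^i$ ($i = 1, \dots, l$) and using that $|||z|||^{p'} = \sum_{i=1}^l \|x^i\|^{p'}$ depends on the $x^i$ only through $r_i := \|x^i\|$, the defining integral for $A_l$ becomes
\[
	A_l = (n\, m(B^n))^l \int_{\{ r_i > 0,\ \sum_{i=1}^l r_i^{p'} < 1 \}} \prod_{i=1}^l r_i^{n-1}\, dr_1 \cdots dr_l.
\]

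Next I would substitute $s_i = r_i^{p'}$, under which the constraint becomes $\sum_i s_i < 1$ and each factor $r_i^{n-1}\, dr_i$ transforms into $\frac{1}{p'} s_i^{n/p'-1}\, ds_i$. This turns the remaining integral into the Dirichlet integral
\[
	\int_{\{ s_i > 0,\ \sum_i s_i < 1 \}} \prod_{i=1}^l s_i^{\frac{n}{p'}-1}\, ds_1 \cdots ds_l = \frac{\Gamma\!\left(\frac{n}{p'}\right)^l}{\Gamma\!\left(\frac{ln}{p'}+1\right)},
\]
so that $A_l = \left( \frac{n}{p'}\, m(B^n) \right)^l \Gamma(n/p')^l / \Gamma(ln/p'+1)$. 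Finally, using $\Gamma\!\left(\frac{ln}{p'}+1\right) = \frac{ln}{p'}\, \Gamma\!\left(\frac{ln}{p'}\right)$ to peel off the factor $\frac{1}{l}(n/p')^{l-1}$ from $(n/p')^l$ yields exactly the claimed expression \eqref{Al}.

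The only step that is not purely mechanical is justifying the radial decomposition for a general (possibly non-Euclidean) norm $\|\cdot\|$; but this is the same cone-measure computation already carried out for monomial weights in the proof of Lemma \ref{Lemma1} (specialized to $A = (0,\dots,0)$, where $D = n$ and $P(B_A) = n\, m(B^n)$), so no genuinely new difficulty arises. Everything else reduces to the substitution above and the standard Dirichlet integral, so I expect the proof to be short and entirely computational.
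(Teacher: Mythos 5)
Your proof is correct, but it takes a genuinely different route from the paper's. The paper proves \eqref{Al} by a recursion in $l$: writing $z=(z',x^l)$, scaling out the last block via $w'=z'/(1-\|x^l\|^{p'})^{1/p'}$ to obtain $A_l = A_{l-1}\int_{B^n}(1-\|x^l\|^{p'})^{n(l-1)/p'}\,dx^l$, evaluating this single-block integral with anisotropic polar coordinates and the Beta function, and then solving the resulting recurrence through the auxiliary sequence $B_l = l\,\Gamma(\tfrac{nl}{p'})A_l$. You instead apply the radial decomposition simultaneously in all $l$ blocks and reduce $A_l$ in one step to the Dirichlet--Liouville integral
\begin{equation*}
\int_{\{s_i>0,\ \sum_i s_i<1\}}\prod_{i=1}^l s_i^{\frac{n}{p'}-1}\,ds
= \frac{\Gamma\left(\tfrac{n}{p'}\right)^l}{\Gamma\left(\tfrac{ln}{p'}+1\right)},
\end{equation*}
which gives the closed form at once. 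Your computation checks out: the substitution $s_i=r_i^{p'}$ does give $r_i^{n-1}\,dr_i=\tfrac{1}{p'}s_i^{n/p'-1}\,ds_i$, and using $\Gamma(\tfrac{ln}{p'}+1)=\tfrac{ln}{p'}\Gamma(\tfrac{ln}{p'})$ recovers \eqref{Al} exactly. Your justification of the radial decomposition for a general norm---homogeneity of Lebesgue measure under dilations, giving $\int_{\re^n}g(\|x\|)\,dx = n\,m(B^n)\int_0^\infty g(r)r^{n-1}\,dr$---is sound, and is the same device the paper itself uses (as $P_{\|\cdot\|}(B^n)=n\,m(B^n)$) in the proof of Lemma \ref{Lemma5.2}; applying it block by block via Fubini is legitimate because the integrand, the indicator of a region determined by $(\|x^1\|,\dots,\|x^l\|)$, is radial in each block separately. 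As for what each approach buys: the paper's recursion relies only on the one-dimensional Beta function, at the cost of setting up and solving a recurrence; your argument is shorter and more transparent, but it delegates the induction to the standard Dirichlet integral formula, which is itself usually proved by essentially the recursion the paper carries out.
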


\begin{proof}
When $l = 1$, then $N = n$ and $A_1 = \int_{B^n} 1 dx = m(B^n)$. 
For $l > 1$, we write $z = (x^1, x^2, \cdots, x^l)$, $x^i \in \re^n$ $(i=1, \cdots l)$, as $z = (z', x^l)$ where $z' = (x^1, \cdots, x^{l-1}) \in \re^{n(l-1)}$.
Then we have $||| z |||^{p'} = ||| z' |||^{p'} + \| x^l \|^{p'}$, where $||| z' ||| = \( \sum_{i=1}^{l-1} \| x^i \|^{p'} \)^{1/{p'}}$.
We compute
\begin{align*}
	A_l = \int_{||| z |||^{p'} < 1} dz &= \int_{x^l \in B^n} \( \int_{\sum_{i=1}^{l-1} \| x^i \|^{p'} < 1 - \| x^l \|^{p'}} dx^1 dx^2 \cdots dx^{l-1} \) dx^l \\
	&= \int_{x^l \in B^n} \( \int_{||| z' |||^{p'} < 1 - \| x^l \|^{p'}} dz' \) dx^l \\
	&= \int_{x^l \in B^n} \int_{||| w' |||^{p'} < 1} (1-\| x^l \|^{p'})^{\frac{n(l-1)}{{p'}}} dw' dx^l \\
	&= \( \int_{||| w' |||^{p'} < 1} dw' \) \int_{x^l \in B^n} (1-\| x^l \|^{p'})^{\frac{n(l-1)}{{p'}}} dx^l \\
	&= A_{l-1} \int_{x^l \in B^n} (1-\| x^l \|^{p'})^{\frac{n(l-1)}{{p'}}} dx^l,
\end{align*}
where we have used a change of variables
\[
	w' = \frac{z'}{(1 - \| x^l \|^{p'})^{1/{p'}}} \in \re^{n(l-1)}, \quad  dw' = \( \frac{1}{(1 - \| x^l \|^{p'})^{1/{p'}}} \)^{n(l-1)} dz'.
\]
Write $x^l = x \in \re^n$ and let us compute
\begin{align*}
	\int_{x \in B^n} (1-\| x \|^{p'})^{\frac{n(l-1)}{{p'}}} dx.
\end{align*}
Since the integrand is $\| \cdot \|$-symmetric function, we can exploit the ``polar coordinate" to get
\begin{align*}
	\int_{x \in B^n} (1-\| x \|^{p'})^{\frac{n(l-1)}{{p'}}} dx = P_{\| \cdot \|}(B^n) \int_0^1 (1 - r^{p'})^{\frac{n(l-1)}{{p'}}} r^{n-1} dr,
\end{align*}
where
\begin{align*}
	P_{\| \cdot \|}(B^n) = n \cdot m(B^n) = n \int_{\{ x \in \re^n : \| x \| < 1 \}} dx
\end{align*}
is the anisotropic perimeter with respect to the norm $\| \cdot \|$.
Thus we can continue as
\begin{align*}
	\int_{x \in B^n} (1-\| x \|^{p'})^{\frac{n(l-1)}{p'}} dx &= n m(B^n) \int_0^1 (1 - r^{p'})^{\frac{n(l-1)}{p'}} r^{n-1} dr \\
	&= \frac{n m(B^n)}{p'} \int_0^1 (1 - t)^{\frac{n(l-1)}{{p'}}} t^{\frac{n}{{p'}}-1} dt \\
	&= \frac{n m(B^n)}{p'} B \( \frac{n}{{p'}}, \frac{n(l-1)}{{p'}}+1 \) \\
	&= \frac{n m(B^n)}{p'} \frac{\Gamma(\frac{n}{p'}) \Gamma(\frac{n(l-1)}{p'}+1)}{\Gamma(\frac{nl}{p'} + 1)} \\
	&= \frac{n m(B^n)}{p'} \( \frac{l-1}{l} \) \frac{\Gamma(\frac{n}{{p'}}) \Gamma(\frac{n(l-1)}{p'})}{\Gamma(\frac{nl}{p'})},
\end{align*}
where $B(x,y)$ is the Beta function, and we have used a change of variables $r^{p'} = t$, $p'r^{p'-1} dr = dt$.
Thus we obtain
\[
	\begin{cases}
	&A_l = A_{l-1} \cdot \frac{n m(B^n)}{p'} \( \frac{l-1}{l} \) \frac{\Gamma(\frac{n}{p'}) \Gamma(\frac{n(l-1)}{p'})}{\Gamma(\frac{nl}{p'})}, \quad (l=2,3,\cdots) \\
	&A_1 = m(B^n).
	\end{cases}
\]
Put
\[
	B_l = l \cdot \Gamma(\tfrac{nl}{{p'}}) A_l, \quad (l \in \N)
\]
and $C(n,{p'}) = \frac{n m(B^n)}{{p'}} \Gamma(\frac{n}{{p'}})$.
Then we get
\[
	\begin{cases}
	&B_l = C(n,{p'}) B_{l-1}, \quad (l=2,3,\cdots) \\
	&B_1 = \Gamma(\frac{n}{{p'}}) m(B^n).
	\end{cases}
\]
Thus we have
\begin{align*}
	B_l &= (C(n,{p'}))^{l-1} B_1 = \(\frac{n}{{p'}}\)^{l-1} \( m(B^n) \Gamma\(\frac{n}{{p'}}\) \)^l,  \\
	A_l &= \frac{B_l}{l \Gamma(\frac{nl}{{p'}})} = \frac{(\frac{n}{{p'}})^{l-1} (m(B^n))^l (\Gamma(\frac{n}{{p'}}))^l}{l \Gamma(\frac{nl}{{p'}})}.
\end{align*}
	
\end{proof}

Corresponding to Lemma \ref{Lemma2}, we have the following. 

\begin{lemma}\label{Lemma5.2}
Let $\sigma > 0$ and let $\al, \beta > 0$ be such that $\al \beta > N$.
Then we have
\begin{align*}
	\int_{\re^N} \frac{1}{(1 +\sigma ||| z - z_0 |||^\al)^{\beta}} \,dz = N A_l \frac{\Gamma(\frac{N}{\al}) \Gamma(\beta - \frac{N}{\al})}{\al \Gamma(\beta)} \sigma^{-\frac{N}{\al}}
\end{align*}
for any $z_0 \in \re^N$.
\end{lemma}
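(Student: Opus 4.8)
The plan is to imitate the proof of Lemma \ref{Lemma2}, replacing the monomial-weighted polar coordinates by the radial decomposition attached to the norm $||| \cdot |||$. First I would use the translation invariance of Lebesgue measure $dz$ to reduce to the case $z_0 = 0$: since there is no weight (in contrast with Lemma \ref{Lemma1}, where one had to exploit $(x_0)_i = 0$ whenever $A_i > 0$), the substitution $z \mapsto z + z_0$ leaves both $dz$ and the integrand $(1 + \sigma |||z - z_0|||^\al)^{-\beta}$ in the desired form, so it suffices to evaluate $\int_{\re^N} (1 + \sigma |||z|||^\al)^{-\beta}\,dz$.

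The key step is the radial-integration identity for an arbitrary norm,
\[
	\int_{\re^N} g(|||z|||)\,dz = N A_l \int_0^\infty g(r)\, r^{N-1}\,dr,
\]
valid for any nonnegative radial profile $g$. I would derive this by the layer-cake formula: for decreasing $g$ the superlevel set $\{ z : g(|||z|||) > \la \}$ is the dilate $R(\la)\,\tilde{B}^N$ determined by $g(R(\la)) = \la$, whose volume equals $R(\la)^N A_l$ by the $1$-homogeneity of the norm and Lemma \ref{Lemma5.1}; integrating in $\la$ and changing variables back to $r$ yields the identity. Equivalently, this is the assertion that the anisotropic perimeter satisfies $P_{|||\cdot|||}(\tilde{B}^N) = N A_l$, exactly the relation $P_{\|\cdot\|}(B^n) = n\, m(B^n)$ already used inside the proof of Lemma \ref{Lemma5.1}.

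Applying this with $g(r) = (1 + \sigma r^\al)^{-\beta}$ reduces everything to the one-dimensional integral $\int_0^\infty r^{N-1}(1 + \sigma r^\al)^{-\beta}\,dr$. The substitution $s = \sigma^{1/\al} r$ factors out $\sigma^{-N/\al}$, and then $u = s^\al$ turns the remainder into the Beta integral $\frac{1}{\al}\int_0^\infty u^{N/\al - 1}(1+u)^{-\beta}\,du = \frac{1}{\al} B\(\frac{N}{\al}, \beta - \frac{N}{\al}\) = \frac{1}{\al}\frac{\Gamma(N/\al)\,\Gamma(\beta - N/\al)}{\Gamma(\beta)}$, which converges precisely under the hypothesis $\al\beta > N$ (so that $\beta - N/\al > 0$, the other parameter $N/\al$ being automatically positive). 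Multiplying by the factor $N A_l$ produces the claimed formula.

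The only point that genuinely requires care is the radial-integration identity for a general, possibly nonsmooth, anisotropic norm $|||\cdot|||$: one should not blindly invoke spherical coordinates and a surface measure on $\partial \tilde{B}^N$ without justification. The layer-cake argument sidesteps this technicality, using only that the dilates $R\,\tilde{B}^N$ have volume $R^N A_l$, which is immediate from homogeneity and is exactly where the explicit value of $A_l$ from Lemma \ref{Lemma5.1} enters. Everything else is the routine Beta-function computation already carried out in Lemma \ref{Lemma2}.
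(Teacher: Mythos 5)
Your proof is correct, and its skeleton is exactly the paper's: reduce to $z_0 = 0$ by translation invariance of $dz$, reduce the $N$-dimensional integral to $N A_l \int_0^\infty r^{N-1}(1+\sigma r^{\al})^{-\beta}\,dr$, and evaluate that by the Beta function, with $\al\beta > N$ giving convergence. The one place where you genuinely depart from the paper is the justification of the radial step: the paper simply notes that the integrand is symmetric with respect to $|||\cdot|||$ and integrates in ``polar coordinates,'' writing the factor as the anisotropic perimeter $P_{|||\cdot|||}(\tilde{B}^N) = N A_l$, whereas you derive the identity
\[
	\int_{\re^N} g(|||z|||)\,dz = N A_l \int_0^\infty g(r)\, r^{N-1}\,dr
\]
from the layer-cake formula together with the homogeneity $|R\,\tilde{B}^N| = R^N A_l$. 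Your version is the more careful one: for a general norm the unit sphere $\pd \tilde{B}^N$ need not be smooth, so introducing a surface measure and a co-area-type factorization would strictly speaking require justification, while the layer-cake argument uses only Fubini, monotonicity of the profile $g(r) = (1+\sigma r^{\al})^{-\beta}$, and the scaling of volumes (the change of variables back to $r$ implicitly uses $r^N g(r) \to 0$ as $r \to \infty$, which your hypothesis $\al\beta > N$ guarantees; alternatively one can apply Fubini directly on $\{(z,r) : |||z||| < r\}$ and avoid even that). The paper's phrasing is shorter and parallels the weighted computation of Lemma \ref{Lemma2}; your argument fills in the implicit point at no extra cost and buys validity for arbitrary, possibly nonsmooth, norms.
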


\begin{proof}
It is enough to assume $z_0 = 0 \in \re^N$.
In this case, since the integrand is a symmetric function with respect to the norm $||| \cdot |||$ on $\re^N$,
we derive the formula  by using the ``polar coordinates" again.
Let 
\[
	P_{||| \cdot |||}(\tilde{B}^N) = N \( \int_{\{ z \in \re^N : ||| z ||| < 1 \}} dz \)
\]
denote the anisotropic perimeter with respect to the norm $||| \cdot |||$ on $\re^N$.
Then we compute
\begin{align*}
	\int_{\re^N} \frac{dz}{(1 +\sigma ||| z |||^\al)^{\beta}} &= P_{||| \cdot |||}(B^N) \int_0^{\infty} \frac{r^{N-1}}{(1 + \sigma r^\al)^{\beta}} dr \\
	&= N \( \int_{\{ z \in \re^N : ||| z ||| < 1 \}} dz \) \int_0^{\infty} \frac{(\sigma^{-1/\al} s)^{N-1}}{(1 + s^\al)^{\beta}} \sigma^{-1/\al} ds \\
	&= N A_l \sigma^{-\frac{N}{\al}} \int_0^{\infty} \frac{s^{N-1}}{(1 + s^\al)^{\beta}} ds \\
	&= N A_l \sigma^{-\frac{N}{\al}} \frac{\Gamma(\frac{N}{\al}) \Gamma(\beta - \frac{N}{\al})}{\al \Gamma(\beta)}
\end{align*}
which proves lemma.
Here, $A_l$ is defined as in \eqref{Al} in Lemma \ref{Lemma5.1} and we have used a formula
\[
	\int_0^{\infty} \frac{s^c}{(1 + s^a)^b} ds = \frac{\Gamma(\frac{c+1}{\al}) \Gamma(b - \frac{c+1}{a})}{a \Gamma(b)}
\]
for $a,b,c > 0$ such that $ab > c + 1$.
\end{proof}

\vspace{1em}

Recall the (unweighted) sharp $L^p$-logarithmic Sobolev inequality on $\re^n$ with the norm $\| \cdot \|$ for $1 \le p < \infty$:
\begin{equation}
\label{LSp}
	\int_{\re^n} |f|^p \log |f|^p dx \le \frac{n}{p} \log \( \mathcal{L}_p \int_{\re^n} \| \nabla f \|_*^p dx \)
\end{equation}
for any function $f \in W^{1,p}(\re^n) = \{ f \in L^p(\re^n) : \| \nabla f \|_* \in L^p(\re^n) \}$ such that $\int_{\re^n} |f|^p dx = 1$.
Here the sharp constant $\mathcal{L}_p$ is given by
\begin{align*}
	&\mathcal{L}_1 = \frac{1}{n} ( m(B^n) )^{-1/n}, \quad p = 1 \\
	&\mathcal{L}_{p} = \frac{p}{n} \(\frac{p-1}{e}\)^{p-1} \(\Gamma(n/p' + 1) m(B^n) \)^{-p/n}, \quad p > 1.
\end{align*}
where $p' = \frac{p}{p-1}$ for $p > 1$ and $m(B^n) = \int_{B^n} 1 dx$ as before.

%Theorem \ref{Theorem:equality_p} below was first proven by del Pino and Dolbeault in the case when $\| \cdot \| = | \cdot |$ (Euclidean norm) and $1 < p < n$
%by different methods.

The main result in this section is the following:

\begin{theorem}\label{Theorem:equality_p}
Let $\| \cdot \|$ be any norm on $\re^n$ and let $1 < p < \infty$, $p' = \frac{p}{p-1}$.
Then the equality in the sharp $L^p$-logarithmic Sobolev inequality \eqref{LSp},
which holds for any $f \in W^{1,p}(\re^n)$ such that $\int_{\re^n} |f|^p dx = 1$,
occurs if and only if 
\[
	f(x) = \beta \exp \(-\frac{\| x-x_0 \|^{p'}}{\sigma}\)
\]
for some $\sigma>0$, $x_0 \in \re^n$, and $\beta \in \re$ with
\[
	|\beta|^{-p}= \int_{\re^n} \exp \(-\frac{\| x \|^{p'}}{\sigma}\) dx.
\]
\end{theorem}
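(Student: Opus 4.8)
The plan is to transpose the tensorization scheme of \S4 to general $1<p<\infty$ and to the norm $|||\cdot|||$, and then to extract the extremals by following the $\re^N$-Sobolev minimizers through the limit $l\to\infty$, exactly as in the probabilistic picture of \cite{Bob}. The \emph{sufficiency} (``if'') part is a direct substitution: plugging $f(x)=\beta\exp(-\|x-x_0\|^{p'}/\sigma)$ into both sides of \eqref{LSp} and evaluating the resulting integrals $\int_{\re^n}e^{-t\|x\|^{p'}}\,dx$ and $\int_{\re^n}e^{-t\|x\|^{p'}}\|x\|^{p'}\,dx$ in $\|\cdot\|$-polar coordinates (the unweighted, general-norm analogue of Lemma~\ref{Lemma1}) shows equality holds, with $\beta$ fixed by $\int_{\re^n}|f|^p\,dx=1$. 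So I concentrate on \emph{necessity}.

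For necessity I would first record the two tensorization identities for $F(z)=\prod_{i=1}^l f(x^i)$ on $\re^N$, $N=nl$:
\[
\int_{\re^N}|F(z)|^t\,dz=\Big(\int_{\re^n}|f|^t\,dx\Big)^{l}\ \ (t\ge1),\qquad
\int_{\re^N}|||\nabla F|||_*^p\,dz=l\int_{\re^n}\|\nabla f\|_*^p\,dx .
\]
The second identity is the substitute for \eqref{R2} (which fails for $p\neq2$); it relies on the fact that the dual norm of $|||z|||=(\sum_i\|x^i\|^{p'})^{1/p'}$ is $|||\xi|||_*=(\sum_i\|\xi^i\|_*^p)^{1/p}$, so that $|||\nabla F(z)|||_*^p=\sum_i\big(\prod_{j\neq i}|f(x^j)|\big)^p\|\nabla f(x^i)\|_*^p$ integrates to $l\int\|\nabla f\|_*^p\,dx$ once $\int|f|^p=1$. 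Feeding these into the sharp $L^p$-Sobolev inequality for $(\re^N,|||\cdot|||)$ (\cite{Ng}), taking $1/l$-th roots and letting $l\to\infty$ (using Lemma~\ref{Lemma0} with $d\mu=|f|^p\,dx$, Lemma~\ref{Lemma5.1} for the volume $A_l$, and Stirling \eqref{Stirling}) recovers \eqref{LSp} with the sharp constant $\mathcal L_p$. In particular, equality in \eqref{LSp} forces asymptotic equality in the $\re^N$-Sobolev inequality, so the extremal characterization (\cite{Talenti}, \cite{Ng}) yields
\[
F_l(z)\sim a_l\big(1+b_l|||z-z_0|||^{p'}\big)^{1-N/p}\qquad(l\to\infty)
\]
for some $a_l,b_l>0$ and a fixed $z_0$ with $(z_0)_i=0$ where forced (after using translation invariance).

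The remaining step is the asymptotic bookkeeping, paralleling \eqref{all}--\eqref{44}. From $\int_{\re^N}|F_l|^p\,dz=1$ and Lemma~\ref{Lemma5.2} (with $\alpha=p'$, $\beta=N-p$, valid for $l$ large since then $N>p^2$, and noting $N-p-N/p'=N/p-p$), together with Lemma~\ref{Lemma5.1} and \eqref{Stirling}, I would extract the precise asymptotics of $a_l^{1/l}$ as a power of $lb_l$. Evaluating the extremal at $z=(x,\dots,x)$ (so $|||z-z_0|||^{p'}=l\|x-x_0\|^{p'}$) gives $f_l(x)\sim a_l^{1/l}(1+lb_l\|x-x_0\|^{p'})^{1/l-n/p}$; the trichotomy $b_l\to\infty$, $b_l\to\bar b\neq0$, $b_l\to0$ then leaves only $b_l\to0$, the first two forcing $f_l\to0$ against $\int|f_l|^p=1$. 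Next, evaluating at $z=(x,x_0,\dots,x_0)$ (so $|||z-z_0|||^{p'}=\|x-x_0\|^{p'}$) gives $f_l(x)\sim a_l^{1/l}(1+b_l\|x-x_0\|^{p'})^{1-N/p}$; expanding $\log(1+t)=t+o(1)$ and running the sub-trichotomy of $lb_l$ leaves only $lb_l\to\widetilde b\in(0,\infty)$, producing $f(x)=\beta\exp(-\tfrac{n}{p}\widetilde b\,\|x-x_0\|^{p'})$, i.e. the stated Gaussian with $\sigma=p/(n\widetilde b)$, after which $\beta$ is pinned down by $\int_{\re^n}|f|^p\,dx=1$.

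I expect the obstacle to be twofold. The conceptual hurdle is the gradient identity: as noted in the Remark after \S3, tensorization of $\|\nabla F\|_*^p$ does \emph{not} close up for any single fixed norm when $p\neq2$, and it is precisely the product norm $|||\cdot|||$, whose dual splits as an $\ell^p$-sum of dual norms, that collapses the right-hand side to $l\int\|\nabla f\|_*^p$; getting this dual-norm splitting right is what makes arbitrary norms admissible. The technical hurdle is the constant asymptotics: since $A_l$ now carries the norm-dependent factors $m(B^n)$ and Gamma values at $n/p'$ (Lemma~\ref{Lemma5.1}), the Stirling estimate for $a_l^{1/l}$ is heavier than in \eqref{all}, and one must check that the surviving case $lb_l\to\widetilde b$ reproduces the Gaussian with exactly the exponent $p'$ and a free parameter $\sigma>0$, rather than merely the correct functional shape.
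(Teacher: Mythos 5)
Your proposal is correct and follows essentially the same route as the paper's own proof: the same product-norm tensorization with the dual-norm splitting $|||\xi|||_*=(\sum_i\|\xi^i\|_*^p)^{1/p}$, the same starting asymptotics $F_l(z)\sim a_l(1+b_l|||z-z_0|||^{p'})^{1-N/p}$ from the Sobolev extremals in \cite{Ng}, the same use of Lemmas \ref{Lemma5.1}, \ref{Lemma5.2} and Stirling to pin down $a_l^{1/l}$ as a power of $lb_l$, and the same two evaluations $z=(x,\dots,x)$ and $z=(x,x_0,\dots,x_0)$ with the trichotomies on $b_l$ and $lb_l$. The only (harmless) slip is the remark that $(z_0)_i=0$ ``where forced'': in this unweighted case there is no such constraint on $z_0$, which is why the theorem allows arbitrary centers $x_0\in\re^n$.
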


\begin{proof}
``If" part is checked just by computation. We prove the ``only if" part.

Put $N = nl$ for $l \in \N$.
For a function $f \in C_c^1(\re^n)$ satisfying $\int_{\re^n} |f(x)|^p dx = 1$,
we put
\[
	F(z) = \prod_{i=1}^l f(x^i),
\]
where $x^i \in \re^n$ for each $i = 1,2,\dots,l$,
and $z = (x^1, x^2, \cdots, x^l) \in \re^{nl} = \re^N$.
Define
\[
	||| z ||| = \( \sum_{i=1}^l \| x^i \|^{p'} \)^{1/{p'}}
\]
as before, where $1/p + 1/{p'} = 1$.
Then, we check that the dual norm of $||| \cdot |||$ is given by
\[
	||| \xi |||_{*} = \( \sum_{i=1}^l \| \xi^i \|_*^p \)^{1/p}
\]
for $\xi = (\xi^1, \xi^2, \cdots, \xi^l) \in \re^{nl} = \re^N$,
where $\| \cdot \|_*$ is the dual norm of $\| \cdot \|$ on $\re^n$.
By a direct computation we get
\begin{align}
\label{R1_p}
	&\int_{\re^N} |F(z)|^t dz = \prod_{i=1}^l \int_{\re^n} |f(x^i)|^t dx^i, \quad \forall t \ge 1, \\
\label{R2_p}
	&\int_{\re^N} ||| \nabla F(z) |||_{*}^p dz = l \int_{\re^n} \| \nabla f(x) \|_{*}^p dx.
%2nd equation has been corrected (2022/7/24)
\end{align}
%In the following, we will use the notation $\alpha_l \sim \beta_l$ if $\lim_{l \to \infty} \frac{\alpha_l}{\beta_l} = 1$ for sequences of positive numbers $\{ \al_l \}$ and $\{ \beta_l \}$.
By checking the proof by V. H. Nguyen \cite{Ng}, which uses the sharp Sobolev inequality with arbitrary norm,
we see that it must hold
\begin{equation}
\label{start_p}
	F_l(z) \sim a_l(1 + b_l ||| z-z_{0,l} |||^{p'})^{1-\frac{N}{p}} 
\end{equation}
for every $z \in \re^N$ with some $a_l>0$, $b_l>0$, and $z_{0,l} \in \re^N$.
Here we have used the notation which emphasizes the dependence on $l$ of involved functions and constants.
This asymptotic formula \eqref{start_p} is our starting point.

By translation invariance, we may fix $z_{0,l} = z_0$ for some fixed point $z_0 \in \re^N$. 
Also recall that we consider functions with $\int_{\re^n} |f(x)|^p dx = 1$, 
we have $\int_{\re^N} |F_l(z)|^p dz = 1$, 
which implies by \eqref{start_p}
\begin{equation}
\label{51}
	a_l^p \int_{\re^N} \frac{dz}{(1 + b_l ||| z - z_0 |||^{p'})^{N-p}} \sim 1.
\end{equation}
Thus by Lemma \ref{Lemma5.2}, Lemma \ref{Lemma5.1} \eqref{Al}, and $N = nl$, we have 
\begin{align*}
	&(a_l)^{1/l} \overset{\eqref{51}}{\sim} \( \int_{\re^N} \frac{dz}{(1 + b_l ||| z - z_0 |||^{p'})^{N-p}} \)^{-\frac{1}{pl}} \\
	&= \( N A_l \frac{\Gamma(\frac{N}{p'}) \Gamma(N-p - \frac{N}{p'})}{p' \Gamma(N-p)} (b_l)^{-\frac{N}{p'}} \)^{-\frac{1}{pl}} \\
	&\overset{\eqref{Al}}{=} \( (nl) \frac{(\frac{n}{p'})^{l-1} (m(B^n))^l (\Gamma(\frac{n}{p'}))^l}{l \Gamma(\frac{nl}{p'})} \frac{\Gamma(\frac{nl}{p'}) \Gamma(nl-p - \frac{nl}{p'})}{p' \Gamma(nl-p)} (b_l)^{-\frac{nl}{p'}} \)^{-\frac{1}{pl}} \\
	&= \underbrace{(\frac{n}{p'})^{-1/p} (m(B^n))^{-1/p} (\Gamma(\frac{n}{p'}))^{-1/p} }_{=C(n,p)} \left\{ \frac{\Gamma(nl -p)}{\Gamma(\frac{nl}{p}-p)} \right\}^{\frac{1}{pl}}  (b_l)^{\frac{n}{pp'}} \\
	&= C(n,p) \left\{ \frac{\Gamma(nl -p)}{\Gamma(\frac{nl}{p}-p)} \right\}^{\frac{1}{pl}}  (b_l)^{\frac{n}{pp'}}.
\end{align*}

Now, Stirling's formula \eqref{Stirling} implies 
%\[
%	\Gamma(s) \sim \sqrt{2\pi} s^{s-1/2} e^{-s} \quad (s \to \infty)
%\]
\begin{align*}
	&(a_l)^{1/l} \sim C(n,p) \left\{ \frac{(nl-p)^{nl-p-1/2} e^{-(nl-p)}}{(\frac{nl}{p}-p)^{\frac{nl}{p}-p-1/2} e^{-(\frac{nl}{p} -p)}} \right\}^{\frac{1}{pl}} (b_l)^{\frac{n}{pp'}} \\
	&\sim C(n,p) \left\{ \frac{(nl)^{nl-p-1/2}}{(\frac{nl}{p})^{\frac{nl}{p}-p-1/2}} e^{-nl + \frac{nl}{p}} \right\}^{\frac{1}{pl}} (b_l)^{\frac{n}{pp'}} \\
	&= C(n,p) \left\{ p^{\frac{nl}{p}-p-1/2} (nl)^{nl-\frac{nl}{p}} e^{-\frac{nl}{p'}} \right\}^{\frac{1}{pl}} (b_l)^{\frac{n}{pp'}} \\
	&\sim C(n,p) p^{\frac{n}{p^2}} (nl)^{\frac{nl}{p'} \frac{1}{pl}} e^{-\frac{n}{pp'}} (b_l)^{\frac{n}{pp'}} \\
	&= C(n,p) \( \frac{p^{\frac{p'}{p}} (nl)}{e} b_l \)^{\frac{n}{pp'}} 
%	\sim (l b_l)^{\frac{n}{pp'}} 
\end{align*}
as $l \to \infty$. 
Therefore, we obtain
\begin{equation}
\label{all}
	(a_l)^{1/l} \sim C(n,p) \( \frac{p^{\frac{p'}{p}} n}{e} \)^{\frac{n}{pp'}} (l b_l)^{\frac{n}{pp'}} \quad \text{as $l \to \infty$}.
%	(a_l)^{1/l} \sim (l b_l)^{\frac{n}{pp'}} \quad (l \to \infty).
\end{equation}
Recall $F_l(z) = \Pi_{i=1}^l f_l(x^i)$ where $z = (x^1, \cdots, x^l) \in \re^{nl} = \re^N$, $x^i \in \re^n$ $(i=1,\cdots, l)$.
We choose
$$
	z=(x,\cdots,x), \quad z_0=(x_0,\cdots,x_0) \quad \text{with } x,x_0 \in \re^n.
$$
Then it follows that 
\[
	||| z - z_0 |||^{p'} =  \sum_{i=1}^l \| x^i - x_0^i \|^{p'}= l \| x - x_0 \|^{p'}
\]
and
$$
	f_l(x) = (F_l(z))^{1/l} \sim (a_l)^{1/l} (1 + l b_l \| x-x_0 \|^{p'})^{1/l-n/p} \quad (\forall x \in \re^n)
$$
as $l \to \infty$ by \eqref{start_p}.
Note that $(f_l(x_0))^l = F_l(z_0) \sim a_l$ as $l \to +\infty$.

We have three possible behaviors of the sequence $b_l$ as $l \to \infty$:
\begin{itemize}
\item [i)] $b_l\rightarrow+\infty$,
\item [ii)] $b_l\rightarrow \bar{b}\in (0, +\infty)$,
\item [iii)]$b_l\rightarrow 0$.
\end{itemize}
Indeed if the limit does not exist, then we can argue one of the previous cases up to a subsequence.

If $b_l \to +\infty$, we have the contradiction since by \eqref{all} 
\begin{align*}
	f_l(x) &\sim (a_l)^{1/l} (1 + l b_l \| x-x_0 \|^q)^{1/l-n/p} \\
	&\sim C(n,p) \( \frac{p^{\frac{p'}{p}} n}{e} \)^{\frac{n}{pp'}} (l b_l)^{\frac{n}{pp'}} (1 + l b_l \| x-x_0 \|^{p'})^{1/l-n/p} \\
	&\sim C(n,p) \( \frac{p^{\frac{p'}{p}} n}{e} \)^{\frac{n}{pp'}} (l b_l)^{\frac{n}{pp'} + \frac{1}{l} - \frac{n}{p}} \to 0
\end{align*}
as $l \to +\infty$ for any $x \in \re^n$ with $x \ne x_0$,
which is absurd by the restriction $\int_{\re^n} |f_l(x)|^p dx = 1$.
Note that $\frac{n}{pp'} + \frac{1}{l} - \frac{n}{p} < 0$ when $l$ is sufficiently large.

Also if $b_l \to \bar{b} \in (0, +\infty)$, we see
\begin{align*}
	f_l(x) &\sim (a_l)^{1/l} (1 + l b_l \| x-x_0 \|^q)^{1/l-n/p} \\
%	&\sim C(n,p,q) \( \frac{p^{\frac{q}{p}} n}{e} \)^{\frac{n}{pq}} (l b_l)^{\frac{n}{pq}} (1 + l b_l \| x-x_0 \|^q)^{1/l-n/p} \\
	&\sim C(n,p) \( \frac{p^{\frac{p'}{p}} n}{e} \)^{\frac{n}{pp'}} (\bar{b})^{\frac{n}{pp'} - \frac{n}{p}} l^{\frac{n}{pp'} + \frac{1}{l} - \frac{n}{p}} \to 0
\end{align*}
as $l \to +\infty$ by \eqref{all} for any $x \in \re^n$, $x \ne x_0$, again a contradiction.

Thus the only possible case is the third one and we have $b_l \to 0$ as $l \to \infty$.

Next, we choose
$$
	z=(x,\underbrace{x_0 \cdots,x_0}_{l-1}), \quad z_0=(x_0,\cdots,x_0) \quad \text{with} x,x_0 \in \re^n.
$$
Then it follows that $||| z - z_0 |||^{p'}  = \sum_{i=1}^l \| x^i - x_0^i\|^{p'} = \| x - x_0 \|^{p'}$ and since $F_l(z) = f_l(x) (f_l(x_0))^{l-1} \sim (a_l)^{\frac{l-1}{l}} f_l(x)$, 
we have by \eqref{start_p} that
$$
	f_l(x) \sim (a_l)^{1/l} (1 + b_l \| x-x_0 \|^{p'})^{1-N/p}, \quad \forall x \in \re^n
$$
as $l \to \infty$.
In this case, by $\log(1 + t) = t + o(1)$ as $t \to +0$, we see
\begin{align*}
	f_l(x) &\sim (a_l)^{1/l} (1 + b_l \| x-x_0 \|^{p'})^{1-N/p} \\
	&= (a_l)^{1/l} \exp \( (1 - N/p) \log (1 +  b_l \| x-x_0 \|^{p'}) \) \\
	&\sim (a_l)^{1/l} \exp \( (1 - N/p)   b_l \| x-x_0 \|^{p'}) \).
\end{align*}

Again three cases (up to a subsequence) are possible for the behaviors of the sequence $lb_l$,
\begin{enumerate}
\item[ \textbf{(iii-1)}] $lb_l\rightarrow+\infty$,
\item[ \textbf{(iii-2)}] $lb_l\rightarrow 0$,
\item[ \textbf{(iii-3)}] $lb_l\rightarrow \bar{b}\in (0, +\infty)$.
\end{enumerate}

The first case (iii-1) does not occur since by \eqref{all}, we have
\begin{align*}
	f_l(x) &\sim (a_l)^{1/l} \exp \( (1 - \frac{nl}{p})   b_l \| |x-x_0 \|^{p'}) \) \\
	&\sim C(n,p) \( \frac{p^{\frac{p'}{p}} n}{e} \)^{\frac{n}{pp'}} (l b_l)^{\frac{n}{pp'}} \exp \( (- \frac{n}{p})  (l b_l) \| |x-x_0 \|^{p'}) \) \to 0
\end{align*}
as $l \to +\infty$ for any $x \in \re^n$ such that $x \ne x_0$, if $l b_l \to +\infty$.

Also the second case (iii-2) does not occur, since if it would happen, then
\[
	a_l^{1/l} \sim C(n,p) \( \frac{p^{\frac{{p'}}{p}} n}{e} \)^{\frac{n}{p{p'}}} (l b_l)^{\frac{n}{p{p'}}} \to 0
\]
and $b_l \to 0$ as $l \to \infty$. 
Thus
\begin{align*}
	f_l(x) \sim (a_l)^{1/l} (1 + b_l \| x-x_0 \|^{p'})^{1-(nl)/p} \to 0
\end{align*}
as $l \to +\infty$ for any $x \in \re^n$, which is absurd.

Thus the only case to be considered is the third one: $lb_l\rightarrow \bar{b} \in (0, +\infty)$.
In this case, we have
\begin{align*}
	f_l(x) &\sim (a_l)^{1/l} \exp \( (1 - N/p)   b_l \| x-x_0 \|^{p'}) \) \\
	&\sim C(n,p) \( \frac{p^{\frac{p'}{p}} n}{e} \)^{\frac{n}{pp'}} (l b_l)^{\frac{n}{pp'}} \exp \( (- \frac{n}{p})  (l b_l) \| |x-x_0 \|^{p'}) \) \\
	&\sim C(n,p) \( \frac{p^{\frac{p'}{p}} n}{e} \)^{\frac{n}{pp'}} (\bar{b})^{\frac{n}{pp'}} \exp \( (- \frac{n}{p})  \bar{b} \| |x-x_0 \|^{p'}) \)
\end{align*}
as $l \to +\infty$. 
This is the desired conclusion.
\end{proof}

%---------------------------------------------------------------------------
%
% Acknowledgment and References
%
%---------------------------------------------------------------------------

\vspace{1em}\noindent
{\bf Acknowledgments.}

This work was partly supported by Osaka City University Advanced Mathematical Institute MEXT Joint Usage / Research Center on Mathematics and Theoretical Physics JPMXP0619217849.
The first author (F.F.) has been partially supported by GNAMPA - INdAM.
The second author (F.T.) was supported by JSPS KAKENHI Grant-in-Aid for Scientific Research (B), JP19H01800, and JSPS Grant-in-Aid for Scientific Research (S), JP19H05597.

\end{document}